\newcommand{\Pro}{\ensuremath{\mathbb{P}}}
\newcommand{\pr}{\ensuremath{\mathbb{P}}}
\newcommand{\Var}{\ensuremath{\mathrm{Var}}}
\newcommand{\Po}{\ensuremath{\mathrm{Po}}}
\newcommand{\Ex}{\ensuremath{\mathbb{E}}}
\newcommand{\Bin}{\ensuremath{\mathrm{Bin}}}
\newcommand{\cZ}{\ensuremath{\mathcal{Z}}}
\newcommand{\minus}{\,\text{--}\,}
\newcommand{\cL}{\ensuremath{\mathcal{L}}}
\newcommand{\M}{\ensuremath{{m_p}}}
\newcommand{\s}{\ensuremath{\mathrm{span}}}
\newcommand{\cS}{\ensuremath{\mathcal{S}}}
\newcommand{\ind}{{\mathbb I}}
\newcommand{\eps}{\varepsilon}
\newtheorem{theorem}{Theorem}
\theoremstyle{plain}
\newtheorem{lemma}[theorem]{Lemma}
\newtheorem{proposition}[theorem]{Proposition}
\newtheorem{remark}[theorem]{Remark}
\begin{document}
\title{The component structure of dense random subgraphs of the hypercube.}
\author{
Colin McDiarmid\\Department of Statistics,\\ University of Oxford,\\ 24 - 29 St Giles',\\ Oxford, OX1 3LB, UK.\\cmcd@stats.ox.ac.uk\\
\and
Alex Scott\thanks{Supported by a Leverhulme Trust Research Fellowship}\\Mathematical Institute,\\ 
University of Oxford,\\ Radcliffe Observatory Quarter,\\  Woodstock Road, \\ Oxford, OX2 6GG, UK.\\scott@maths.ox.ac.uk\\ 
\and 
Paul Withers\\Mathematical Institute,\\ University of Oxford,\\ Radcliffe Observatory Quarter,\\  Woodstock Road, \\ Oxford, OX2 6GG, UK.\\paul.n.withers@gmail.com}
\date{}
\maketitle
\begin{abstract}
\noindent
Given $p \in (0,1)$, 
we let $Q_p=  Q_p^d$ be the random subgraph of the $d$-dimensional hypercube $Q^d$ where edges are present independently with probability $p$. 
It is well known that, as $d \rightarrow \infty$, if $p>\frac12$ then with high probability $Q_p$ is connected; and if $p<\frac12$ then with high probability $Q_p$ consists of one giant component together with many smaller components which form the `fragment'.

Here we fix $p \in (0,\frac12)$, and investigate the fragment, and how it sits inside the hypercube. For example, we give asymptotic estimates for the mean numbers of components in the fragment of each size, and describe their asymptotic distributions, much extending earlier work of Weber.
\end{abstract}

\section{Introduction}\label{3sec:intro}

\noindent
The hypercube  $Q=Q^d$ is the graph with vertex set $\{0,1\}^d$ and with two vertices adjacent when they differ in exactly one co-ordinate. Alternatively it can be considered as the graph on the power set of $[d]=\{1,2,\dots ,d\}$ in which two sets are adjacent when their symmetric difference is a singleton.
We consider the random subgraph $Q_p=Q^d_p$ where the edges appear independently with fixed probability $p$, and examine the component structure as $d\rightarrow \infty$. We say that $Q_p$ has a property \emph{with high probability (or whp)} if the property holds with probability tending to 1 as $d\rightarrow \infty$, and $Q_p$ has a property \emph{with very high probability (or wvhp)} if it holds with probability $1-e^{-\Omega(d)}$.

Burtin~\cite{burtin1977} considered random subgraphs in the dense case and showed that, for fixed $p<1/2$, whp $Q_p$ is disconnected and, for fixed $p>1/2$, whp $Q_p$ is connected.  Erd\H{o}s and Spencer \cite{79} showed that for $p=1/2$, $Q_p$ is connected with probability tending to $e^{-1}$ (see also Bollob\'as~\cite[Theorem 14.3]{bollobasbook}).  Also Weber~\cite{Weber1986} considered the dense case -- we will discuss  his work shortly.  Ajtai, Koml\'{o}s and Szemer\'{e}di \cite{72} looked at the sparse case, and demonstrated that a phase transition occurs at $p=1/d$\,: for $p=\lambda/d$ with $\lambda>1$, whp the largest component of $Q_p$ has size $\Omega(2^d)$ and the second largest has size $o(2^d)$, while for $\lambda<1$ whp the largest component has size $o(2^d)$. Bollob\'{a}s, Kohayakawa and \L{}uczak \cite{109,57BB,53BB,54BB} gave more detailed results around the phase transition at $p=1/d$, and investigated the minimum degree, connectedness and the existence of a complete matching in the sequence of subgraphs of $Q^d$ formed by adding edges randomly, one at a time. They showed that, almost surely, this graph process becomes connected exactly at the moment when the last isolated vertex disappears,  and at this time a complete matching emerges. See~\cite{BCHS2006,HN2020} for more recent work concerning behaviour around the phase transition and for further references.

This paper looks at the sizes of the components of $Q_p$ for a fixed $p$ with $0 < p <1/2$.  These graphs $Q_p$ will be disconnected with a single large component whp.
Note that we cannot expect some sort of elegant `symmetry rule' as for Erd\H{o}s-R\'enyi random graphs $G(n,p)$, where (roughly speaking), given the size of the largest component in a supercritical random graph $G(n,p)$, the rest of the graph looks like a subcritical $G(n', p')$ (see for example~\cite[section 5.6]{JLR}): the geometry of the hypercube makes life more interesting and complicated.

We denote the number of vertices in a graph $G$ by $v(G)$, and call this the \emph{size} of $G$; and denote the number of edges by $e(G)$. In $Q_p$, we order the components by size (where components having the same size are ordered say by the position of the `smallest' vertex of each component in some canonical ordering of the vertices). Denote the $j$-th component by $\cL_j$ and let $L_j=v(\cL_j)$ be the size of $\cL_j$ (where $\cL_j =\emptyset$ and $L_j=0$ if $G$ has less than $j$ components). The \emph{giant} component is $\cL_1$.  The \emph{fragment}  $\cZ$ is the graph formed by all the components other than $\cL_1$, and we let $Z=v(\cZ) = 2^d - L_1$. Let $X_t$ denote the number of components of  $Q_p$ of size $t$, and let $\mu_t= \Ex[X_t]$. 
Let $X = \sum_{t \geq 1} X_t$ be the total number of components of $Q_p$.
Finally let $q=1-p$. 

Observe that $\mu_1=(2q)^d$; and that $\mu_1 \to \infty$ as $d \to \infty$, since $2q>1$.
The quantity $\M$ defined by
\begin{equation} \label{def.mp}
  \M 
   = \left\lfloor 1/\log_2 (1/q) \right\rfloor
\end{equation}
is central to our results.  Observe that $\M$ is large for small $p$ and decreases to 1 as $p$ increases to 1/2.
For an integer $t$, we have $2q^t \ge 1 \Leftrightarrow t \le \M$. In particular, we always have $\M \geq 1$ since $2q>1$; 

Weber~\cite{Weber1986} showed that whp the fragment size $Z$ satisfies $Z \sim \mu_1$ (that is, $Z = (1+o(1)) \mu_1$), the second largest component size $L_2$ satisfies $L_2=\M$, and the number $X_t$ of components of size $t$ satisfies $X_t \sim \mu_t = \Theta( d^{t-1} (2q^t)^d)$ for each $t=1,\ldots,\M$; and it follows that the total number $X$ of components satisfies $X \sim \mu_1$ whp.  We much extend and sharpen these results, presenting our results in six theorems.  Weber's results in~\cite{Weber1986} are contained within Theorems~\ref{thm.C1} and~\ref{cor.normal} below.
  (Weber later introduced also a probability for vertices to appear in the random subgraph of $Q^d$~\cite{Weber1992}, but we do not pursue that extension here.) 

Our first three theorems concern the global behaviour of components in $Q_p$; the next two theorems 
concern more local behaviour (and are needed to prove the earlier ones); and our last theorem, Theorem~\ref{thm.jointdistrib}, concerns the joint distribution of random variables like the $X_t$.

 Throughout, we fix $0<p<1/2$ and let $q=1-p$.  The first theorem can be introduced now, with no further definitions.  It describes the total number $X$ of components in $Q_p$, the size $Z=2^d-L_1$ of the fragment, and the size $L_2$ of the second largest component.  Note that, as $d \to \infty$, we have $d \ll \mu_1$ and so $\sqrt{d \mu_1} \ll \mu_1$.

\begin{theorem}\label{thm.C1}
For fixed $0<p<1/2$, the random graph $Q_p=Q_p^d$ satisfies the following.
\begin{enumerate}[(a)]
\item \label{3tp:1} 
Let $Y$ be either the number $X$ of components of $Q_p$ or the fragment size $Z$. Then
$\Ex[Y]= \mu_1(1+ \Theta(dq^d))$; and for each $\eps>0$ we have $|Y- \Ex[Y]| < \eps \sqrt{d\mu_1}$ wvhp. 
\item \label{3tp:2}
The second largest component size $L_2$ in $Q_p$ satisfies $L_2=\M$ wvhp,
where $\M$ is as in~(\ref{def.mp}). Also, the mean and variance satisfy $\, |\Ex[L_2] - \M | = e^{-\Omega(d)}$ and $\Var(L_2)= e^{-\Theta(d)}$.
\end{enumerate}
\end{theorem}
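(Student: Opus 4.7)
The unifying idea is that $X$, $Z$, and $L_2$ are all controlled by the behaviour of small components, especially $X_1$, the number of isolated vertices, with mean $\mu_1=(2q)^d$.

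For part~(a), write $X=X_1+\sum_{t\ge 2} X_t$ and $Z=X_1+\sum_{t\ge 2} tX_t$ and treat the two summands separately. I would first establish sharp concentration of $X_1$: a direct covariance computation, using that $\Cov(I_v,I_w)\ne 0$ only when $v\sim w$, gives $\Var(X_1)=(1+o(1))\mu_1$, and Janson's inequality (whose dependency sum is $\Delta=O(dq^{d-1})\,\mu_1=o(\mu_1)$) supplies the lower tail $\Pro[X_1\le\mu_1-\eps\sqrt{d\mu_1}]\le\exp(-\Omega(\eps^2 d))$. A matching upper tail would come from the extended Janson upper-tail bound or a Kim--Vu polynomial-concentration inequality; alternatively the Poisson-approximation content of Theorems~4 and~5 can be specialised to $X_1$. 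For the residual $\sum_{t\ge 2}X_t$ (resp.\ $\sum_{t\ge 2}tX_t$) I would use the geometric decay $\mu_{t+1}/\mu_t=\Theta(dq^d)$ together with term-by-term Markov bounds (or the sharper concentration provided for each $X_t$ by the local theorems) to show the tail contribution is $o(\sqrt{d\mu_1})$ wvhp. The triangle inequality then yields $|Y-\mu_1|<\eps\sqrt{d\mu_1}$ wvhp for $Y\in\{X,Z\}$.

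For part~(b), the upper bound $L_2\le\M$ wvhp splits by the size of the non-giant component in question. For sizes $\M<t\le f(d)$ with $f$ polynomial, $\mu_t=\Theta(d^{t-1}(2q^t)^d)=e^{-\Omega(d)}$ because $2q^t<1$, and summing the polynomially many terms preserves this. For larger $t$ an edge-isoperimetric inequality in $Q^d$ is needed: any non-giant component of such size must be separated from the giant by a large edge-boundary, all of whose edges must be absent in $Q_p$, giving a super-exponentially small probability. For the lower bound $L_2\ge\M$, apply Janson to $X_\M$: since $\Ex[X_\M]=\Theta(d^{\M-1}(2q^\M)^d)$ is at least polynomial (and often exponential) in $d$, one obtains $\Pro[X_\M=0]\le\exp(-\Omega(\Ex[X_\M]))=e^{-\Omega(d)}$.

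For $\Ex[L_2]$ and $\Var(L_2)$, decompose $\Ex[L_2]=\M+\sum_{t\ne\M}(t-\M)\Pro[L_2=t]$ and similarly for $\Ex[L_2^2]$. The $t<\M$ piece is bounded by $\M\cdot\Pro[L_2<\M]=e^{-\Omega(d)}$ and is harmless. The $t>\M$ tail is the main technical obstacle: the trivial bound $t\le 2^d$ must be offset by $\Pro[L_2=t]$ decaying faster than $2^{-d}$, so the isoperimetric estimate for large non-giant components must be sharp enough to give an exponent strictly exceeding $d\log 2$; once the $2^d$ factor is absorbed, the remaining contribution is $e^{-\Omega(d)}$ and delivers both $|\Ex[L_2]-\M|=e^{-\Omega(d)}$ and $\Var(L_2)=e^{-\Theta(d)}$ (with the lower bound on the variance coming from the positive probability $e^{-\Theta(d)}$ that $L_2\ne\M$).
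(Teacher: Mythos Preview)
Your overall architecture is sound, but there are two substantive issues.

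\textbf{The decomposition of $Z$ and the dependency of (a) on (b).} As written, $Z = X_1 + \sum_{t\ge 2} tX_t$ is false: $\sum_{t\ge 1} tX_t = 2^d$, since every vertex lies in some component (including the giant). What you need is $Z = \sum_{t\ge 1} tX_t^{-}$ where $X_t^{-}$ counts only non-giant components of size $t$; equivalently, $Z = \sum_{t\le \M} tX_t$ on the event $\{L_2\le\M < L_1\}$. Consequently, to show the residual $\sum_{t\ge 2}$ is $o(\sqrt{d\mu_1})$ \emph{wvhp}, you must first know $L_2$ is bounded wvhp. The paper makes this explicit by proving part~(b) first, then truncating at $\M$ and invoking the Poisson approximation and Chernoff bound from Theorem~\ref{thm.C5}(b) for the truncated sums $X^{-}=\sum_{t\le\M}X_t$ and $Z^{-}=\sum_{t\le\M}tX_t$. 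Your ``term-by-term Markov'' sketch does not control an unbounded tail in $t$ with exponentially small failure probability.

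\textbf{The isoperimetric route for large $t$.} Your plan for the upper bound $\Pr[L_2>N]$ is to combine the edge-isoperimetric bound $|\partial S|\ge t(d-\log_2 t)$ with a union bound over connected $t$-sets (at most $2^d(ed)^{t-1}$ of them). This works for $t$ up to $2^{\alpha d}$ with $\alpha<1$ fixed, since then $ed\cdot q^{(1-\alpha)d}\to 0$. But for $t$ near $2^{d-1}$ the bound collapses: $d-\log_2 t = O(1)$, so the exponent is only $O(t)$, while $(ed)^{t-1}$ has exponent $\Theta(t\log d)$, and $(edq)^t\to\infty$. So the union bound is vacuous in the top range, precisely where you need failure probability $o(2^{-2d})$ to absorb the $2^{2d}$ factor in $\Var(L_2)$. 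This gap is repairable (e.g.\ by a separate coarse argument bounding the number of cuts rather than components), but it is real and not addressed in your sketch. The paper avoids it entirely with a different mechanism: call a vertex \emph{good} if its $Q_p$-degree is at least $dp/2$; show that with probability $1-2^{-\Omega(d^2)}$ any two good vertices at $Q$-distance $\le 3$ are joined by a short $Q_p$-path, hence all good vertices lie in one component; then any second component of size $>N$ would consist entirely of bad vertices, and a connected set of $N+1$ bad vertices has probability at most $2^d(ed)^N e^{-\Omega(Nd)}$. Choosing $N$ large enough gives $\Pr[L_2>N]=o(2^{-\gamma d})$ for any prescribed $\gamma$, which is exactly what the moment computation needs. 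Your lower bound via $X_{\M}$ and your treatment of the moments (given the strong tail bound) are fine and match the paper.
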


Our second theorem concerns how the fragment sits in $Q^d$.
How much do the components of the fragment cluster together?
How far is it typically from a fixed vertex to the fragment $\cZ$ of $Q_p$?  Given a vertex $u$ in $Q^d$ and $r>0$, the \emph{$r$-ball $B_r(u)$ around $u$} is the set of vertices $v$ at graph distance at most $r$ from $u$ (in $Q^d$).
Recall that, for $0<\eta<1$, the \emph{entropy} $h(\eta)$ is defined to be $-\eta \log_2 \eta - (1-\eta) \log_2 (1-\eta)$, and it is strictly increasing on $(0,\tfrac12)$ with image $(0,1)$. 
Let $\eta^*=\eta^*(p)$ be the unique solution to $h(\eta)=\log_2 \frac1{1-p}$ 
with $0<\eta<\tfrac12$. 
For example, if $p=\frac14$ then $\eta^*  \approx 0.08$.
\begin{theorem}\label{thm.C2}
For fixed $0<p<1/2$, the random graph $Q_p=Q_p^d$ satisfies the following.
\begin{enumerate}[(a)]
\item \label{3tp:2b} There exists $\delta=\delta(p)>0$ such that 
wvhp each $\delta d$-ball in $Q^d$ contains at most $\M$ vertices of the fragment.
\item \label{item.dtof} 
For each $\eps>0$ there is $\gamma=\gamma(\eps,p)>0$ such that wvhp a proportion at most $e^{-\gamma d}$ of the vertices in $Q^d$ are within distance $(\eta^* - \eps) d$ of the fragment $\cZ$, but all vertices are within distance $(\eta^* + \eps) d$.  (All distances are in $Q^d$.)
\end{enumerate}
\end{theorem}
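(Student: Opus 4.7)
Both parts rest on first-moment arguments, combined with Theorem~\ref{thm.C1}(\ref{3tp:2}) to reduce the fragment $\cZ$ to the set $\cZ'$ of vertices of $Q_p$ lying in components of size at most $\M$: since $\Pro(\cZ \neq \cZ') = e^{-\Omega(d)}$, one may freely substitute $\cZ'$ for $\cZ$ up to a wvhp event.

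For \textbf{part~(\ref{3tp:2b})}, if some $\delta d$-ball contains $\M+1$ vertices of $\cZ'$ then, since each small component has diameter $\le \M-1$, the union of the small components meeting that ball lies in a $2\delta d$-ball once $d$ is large. I would bound the expected number of $(\M+1)$-subsets $T$ inside some $2\delta d$-ball with $T \subseteq \cZ'$ by $2^d \binom{|B_{2\delta d}|}{\M} \cdot \max_T \Pro(T \subseteq \cZ')$. To estimate $\Pro(T \subseteq \cZ')$ I would sum over the partition of $T$ induced by ``belonging to the same component'': each part of size $t_j$ in a component of size $s_j \in [t_j, \M]$ forces the absence of at least $s_j d - O(1)$ edges, giving $q^{\sum_j s_j d - O(1)}$ per configuration, with $\mathrm{poly}(d)$-many configurations. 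Since $\sum_j s_j \ge \M + 1$ always, with equality only in the all-singleton, all-isolated case, one obtains $\Pro(T \subseteq \cZ') \le \mathrm{poly}(d)\cdot q^{(\M+1)d - O(1)}$, and using $|B_{2\delta d}| = 2^{h(2\delta)d + o(d)}$ the first-moment bound becomes $2^{(1 + \M h(2\delta) - (\M+1)\log_2(1/q))d + o(d)}$. Since $(\M+1)\log_2(1/q) > 1$ by the definition of $\M$, for $\delta > 0$ sufficiently small this exponent is strictly negative, giving the required wvhp bound.

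For \textbf{part~(\ref{item.dtof})}, upper distance bound: fix $u$ and let $Y_u$ count the isolated vertices of $Q_p$ in $B := B_{(\eta^* + \eps)d}(u)$. Viewing the absent-edge set as a random subset (each edge absent independently with probability $q$), the events $\{v \text{ isolated}\} = \{\text{all } d \text{ edges at } v \text{ absent}\}$ fit the Janson framework, with $\mu = \Ex Y_u = |B| q^d = 2^{\alpha d + o(d)}$ for $\alpha = h(\eta^* + \eps) - \log_2(1/q) > 0$ (by strict monotonicity of $h$ and the definition of $\eta^*$), and dependencies only between adjacent pairs so that $\Delta \le |B| \cdot d \cdot q^{2d-1} = o(\mu)$. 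Janson's inequality then yields $\Pro(Y_u = 0) \le \exp\bigl(-(1 - o(1)) \cdot 2^{\alpha d}\bigr)$, doubly-exponentially small; a union bound over the $2^d$ choices of $u$ is still $e^{-\Omega(d)}$, and wvhp every isolated vertex lies in $\cZ$ (since $L_1 \ge 2$ wvhp).

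For the lower distance bound, let $N = |\{u: B_r(u) \cap \cZ' \neq \emptyset\}|$ with $r = (\eta^* - \eps)d$. The symmetric double count gives $N = \bigl|\bigcup_{v \in \cZ'} B_r(v)\bigr| \le |\cZ'| \cdot |B_r|$; and, using $\mu_s = \Theta(d^{s-1}(2q^s)^d)$ with the $s=1$ term dominating, $\Ex|\cZ'| = \sum_{s \le \M} s\mu_s = (2q)^d(1+o(1))$, so $\Ex N \le 2^d \cdot 2^{-\beta d + o(d)}$ with $\beta = \log_2(1/q) - h(\eta^* - \eps) > 0$. Markov with threshold $e^{-\gamma d} \cdot 2^d$ for $\gamma = \beta(\ln 2)/2$ gives $\Pro(N > e^{-\gamma d} \cdot 2^d) = e^{-\Omega(d)}$. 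The main technical point is the joint probability estimate in part~(\ref{3tp:2b}): one must verify that among all ways the $\M+1$ target vertices can be distributed across small components, the all-isolated configuration is dominant, and that the slack $(\M+1)\log_2(1/q) > 1$ from the definition of $\M$ precisely defeats the geometric factor $|B_{2\delta d}|^\M$ for $\delta$ small.
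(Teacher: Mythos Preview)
Your argument is correct and follows the same architecture as the paper: a first-moment/union bound for part~(a), a volume/Markov bound for the ``few vertices are too close'' half of part~(b), and a ``many isolated vertices in every large ball'' argument for the ``all vertices are close enough'' half.  The differences are in the technical engines, not the strategy, and are worth noting.

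For part~(a), the paper avoids your enumeration over component configurations by a cleaner device: on the event $\{L_2\le\M\}$, every fragment vertex has degree at most $\M-1$ in $Q_p$, so for a set $S$ of $\M+1$ vertices the events $\{e(v,V\setminus S)\le\M-1\}$ for $v\in S$ are independent, giving directly $\Pro(S\subseteq V(\cZ)\land L_2\le\M)\le\bigl(\binom{d}{\M-1}q^{d-O(1)}\bigr)^{\M+1}$. This bypasses your partition-and-complete argument entirely (and incidentally your side remark ``with equality only in the all-singleton, all-isolated case'' is not literally true, though it is irrelevant to the bound). The paper also uses $\binom{|B_r|}{\M+1}$ rather than $\binom{|B_{2\delta d}|}{\M}$, so the explicit threshold $\eta_1$ it extracts is defined by $h(\eta_1)=\log_2(1/q)-1/(\M+1)$; your counting gives a slightly different but equally valid positive~$\delta$.

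For the upper-distance half of part~(b), the paper replaces Janson by the observation that a bipartite class $B''\subseteq B_{(\eta^*+\eps)d}$ with $|B''|\ge\frac12|B'|$ makes the isolation events genuinely independent, so $\Pro(\text{no isolated vertex in }B')\le(1-q^d)^{|B''|}$ directly; Janson is overkill but of course fine. For the lower-distance half, the paper uses the wvhp concentration $Z\le 2\mu_1$ from Theorem~\ref{thm.C1}(a) in place of your Markov step; both yield the same conclusion.
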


In part (a) above, clearly wvhp there are $\delta d$-balls containing at least $\M$ vertices of the fragment -- consider for example any ball with centre in a component of size $\M$.  Thus the statement that wvhp no $\delta d$-ball in $Q^d$ contains strictly more than $\M$ vertices of the fragment is saying strongly 
that the components of the fragment $\cZ$ do not cluster together in $Q^d$.  For example, wvhp no component of $\cZ$ of size $m_p$ is within distance $\delta n$ of any other component of $\cZ$.

In part (b), many vertices are at a short distance in $Q^d$ from the fragment $\cZ$, including of course the vertices in $\cZ$, but only a very small proportion of the total are at distance at most $(\eta^*-\eps) d$.  However, when $r=(\eta^* + \eps) d$, wvhp every $r$-ball contains a vertex in $\cZ$ (and indeed contains $2^{\Omega(d)}$ vertices in $\cZ$).
Overall, the giant gets everywhere, and indeed the fragment is heavily outnumbered everywhere.

The next theorem amplifies part (a) of Theorem~\ref{thm.C1},
concerning the number $X$ of components and the fragment size $Z$. Recall first that, for two random variables $Y$ and $Y'$ taking values in a countable set, the \emph{total variation distance} between their distributions is given by 
\[ d_{TV}(Y,Y')=\frac 12 \sum_k|\Pro(Y=k)-\Pro(Y'=k)|.
\label{3eq:TVD} \]
We use $d_{TV}(Y,\Po(\lambda))$ to denote $d_{TV}(Y,Y')$ where $Y'$ has the Poisson distribution $\Po(\lambda)$ with mean $\lambda$.  Several of our proofs will involve bounding $d_{TV}(Y,\Po(\Ex[Y]))$ for relevant random variables $Y$ (like $X_t$ or $X$), using results on Poisson approximation based on the Stein-Chen method.
By a standard tail bound (see, for example, inequality~(2.9) and Remark 2.6 in~\cite{JLR}),
for any random variable $Y$ and $\lambda>0$, for each $t>0$ we have
\begin{equation} \label{eqn.potail}
\pr(|Y-\lambda| \geq t \sqrt{\lambda}) \leq 2 e^{-t^2/3} + d_{TV}(Y, \Po(\lambda)).
\end{equation}
Also, given a (non-trivial) random variable $Y=Y_d$ we let $Y^*$ denote the natural centred and rescaled version $(Y- \Ex[Y])/\sqrt{\Var(Y)}$.
It is well known (see for example~\cite{BHJ}) that if $Y_n$ is a sequence of random variables with mean $\lambda_n$ such that $d_{TV}(Y_n,\Po(\lambda_n)) \to 0$ and $\lambda_n \to \infty$ as $n \to \infty$, then $(Y_n-\lambda_n)/\sqrt{\lambda_n}$ is asymptotically standard normal.  Thus if also $\Var(Y_n) \sim \lambda_n$ then $Y^*_n$ is asymptotically standard normal.
\begin{theorem}\label{thm.C3} 
Fix $0<p<1/2$ and let $q=1-p$. 
In $Q_p = Q_p^d$, let $Y$ either be the number $X$ of components or be the fragment size $Z$. Then the following properties hold as $d \to \infty$.
\begin{enumerate}[(a)]
\item \label{3tp:1}
$\lambda := \Ex[Y]= (1+ \Theta(dq^d)) \, \mu_1$ and $\, \Var(Y) = (1+O(dq^d)) \, \mu_1$.

\item
$\: d_{TV} (Y, \Po(\lambda))$ is $O(dq^d)$, and $Y^*$ is asymptotically standard normal.
\end{enumerate}
\end{theorem}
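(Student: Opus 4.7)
The strategy is to reduce both parts to the behavior of the isolated-vertex count $X_1$, which dominates $X$ and $Z$ to leading order, and then to apply the Chen-Stein Poisson approximation to a sum of indicators for small components.

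For part (a), I would use the estimates $\mu_t = \Theta(d^{t-1}(2q^t)^d) = \Theta((dq^d)^{t-1}\mu_1)$ (from Weber's calculation, or from Theorems 4 and 5) to conclude that $\sum_{t\ge 2}\mu_t$ and $\sum_{t\ge 2}t\mu_t$ are both $O(dq^d\mu_1)$, so that $\Ex X$ and $\Ex Z$ are both $(1+O(dq^d))\mu_1$. The main variance term is $\Var(X_1)$, and splitting $\sum_{u,v}\Cov(\ind[u\text{ iso}],\ind[v\text{ iso}])$ according to whether $u=v$, $u\sim v$, or $u\not\sim v$ gives
\[\Var(X_1)=\mu_1(1-q^d)+2^d d\,(q^{2d-1}-q^{2d})=\mu_1\bigl(1+O(dq^d)\bigr).\]
The remaining variance contributions---the $\Var(X_t)$ for $t\ge 2$ and the cross-covariances $\Cov(X_s,X_t)$---are each $O(dq^d\mu_1)$, by the same $\mu_t$-estimates and the near-independence of components sitting in well-separated regions of $Q^d$.

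For part (b), the key step is Chen-Stein applied to a sum of small-component indicators. For each connected subgraph $C\subseteq Q^d$ with $|C|\le\M$, let $W_C=\ind[C\text{ is a component of }Q_p]$, and set $X^{\le\M}=\sum_C W_C$; then $\Ex W_C = p^{e(C)}q^{|C|d - 2e(C)}$, and two indicators $W_C,W_{C'}$ are independent unless $C,C'$ lie within $Q^d$-distance $1$ of one another. The standard Chen-Stein bound then gives
\[d_{TV}\bigl(X^{\le\M},\Po(\Ex X^{\le\M})\bigr)\;\le\;\min\!\bigl(1,\tfrac{1}{\Ex X^{\le\M}}\bigr)(b_1+b_2),\]
and the dominant contribution to $b_1+b_2$ comes from pairs with $|C|=|C'|=1$: using $\Ex W_{\{v\}}=q^d$ and $\Ex[W_{\{u\}}W_{\{v\}}]=q^{2d-1}$ for $u\sim v$, this contribution is $O(d\mu_1 q^d)$. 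Pairs with $\max(|C|,|C'|)\ge 2$ are smaller by further factors of $dq^d$, using the standard $(ed)^{t-1}$-type bound on the number of connected subgraphs of size $t$ through a fixed vertex of $Q^d$. Dividing by $\Ex X^{\le\M}=\Theta(\mu_1)$ yields $d_{TV}(X^{\le\M},\Po(\Ex X^{\le\M}))=O(dq^d)$. By Theorem~\ref{thm.C1}(b), wvhp the only component of size $>\M$ is the giant, so $X=1+X^{\le\M}$ and $Z=\sum_{t\le\M}tX_t$ wvhp; this identification (at the cost of an $e^{-\Omega(d)}\ll dq^d$ exceptional event) transfers the approximation to $X$, the comparison with $\Po(\mu_1)$ follows from part (a), and the asymptotic normality of $Y^*$ follows from the quoted standard fact since $\lambda\to\infty$.

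The main obstacle will be the treatment of $Z$: because $Z=\sum_t tX_t$ is a weighted sum rather than a sum of $0/1$ indicators, Chen-Stein does not apply directly. I would deal with this either by invoking Theorem~\ref{thm.jointdistrib} (joint Poisson approximation for the $X_t$) to represent $Z$ as a linear combination of essentially independent Poissons that stays close in total variation to $\Po(\mu_1)$ (since the $t\ge 2$ contributions have total mean $O(dq^d\mu_1)$), or by rewriting $Z=\sum_v\ind[v\in\cZ]$, truncating each indicator to depend only on edges within $Q^d$-distance $\M$ of $v$, and applying Chen-Stein directly to these vertex indicators. A subsidiary technical point is the bookkeeping for the Chen-Stein sums over $|C|\ge 2$, which relies on the combinatorial bound on connected subgraphs of $Q^d$ rooted at a vertex and the $p^{e(C)}q^{|C|d-2e(C)}$ formula for $\Ex W_C$ to confirm that these contributions are indeed absorbed by the $|C|=|C'|=1$ main term.
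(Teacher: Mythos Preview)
Your outline follows the paper's approach closely: reduce $Y$ to a truncated sum of small-component indicators, apply Chen--Stein (packaged as Lemma~\ref{lem.new9a} in the paper), and transfer back. For $Z$ the paper handles the weighting via the trick of replacing each component-indicator by $v(H_i)$ identical copies (Lemma~\ref{lem:TV2}), which is essentially your ``approach 2''.

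There is, however, a real gap in your transfer step. You truncate at $\M$ and write that the cost is $\pr(L_2>\M)=e^{-\Omega(d)}\ll dq^d$. But ``$e^{-\Omega(d)}\ll dq^d$'' is not a valid inference: $dq^d$ is itself $e^{-\Theta(d)}$, so one must compare the actual rates. In fact $\pr(L_2>\M)$ is of order $\mu_{\M+1}=\Theta\big(d^{\M}(2q^{\M+1})^d\big)$, and since $2q^{\M+1}=q\cdot 2q^{\M}\ge q$ by the defining inequality $2q^{\M}\ge 1$ (with strict inequality unless $1/\log_2(1/q)\in\mathbb Z$), the decay of $\pr(L_2>\M)$ is typically \emph{slower} than that of $q^d$. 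So the truncation error is not $O(dq^d)$. The same problem would bite in your variance estimate for part~(a), where bounding $|\Var(Y)-\Var(Y_{\le\M})|$ brings in a factor $2^{2d}\pr(L_2>\M)$. The paper sidesteps all this by truncating not at $\M$ but at a larger constant $N$ supplied by Lemma~\ref{lem.L2leqN}, chosen so that $\pr(L_2>N)=o(2^{-3d})$; since $q>\tfrac12$ this is $o(q^d)$, and then the three-term triangle inequality $d_{TV}(Y,Y_{\le N})+d_{TV}(Y_{\le N},\Po(\lambda_{\le N}))+d_{TV}(\Po(\lambda_{\le N}),\Po(\lambda))$ does the job.

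Two smaller points. Your ``approach 1'' for $Z$, via Theorem~\ref{thm.jointdistrib}, would only yield $O(d^2q^d)$, as the paper itself notes just after that theorem. And in ``approach 2'', $\ind[v\in\cZ]$ is a global event (it depends on which component is the giant); the correct local surrogate is $\ind[v\text{ lies in a component of size}\le N]$, which depends only on edges within distance $N$ of $v$---this is exactly what the weighted Chen--Stein of Lemma~\ref{lem:TV2} encodes.
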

\noindent
Observe that part (a) of Theorem~\ref{thm.C1} follows directly from inequality~(\ref{eqn.potail}) and Theorem~\ref{thm.C3}\,: for 
\begin{eqnarray*} 
  \pr(|Y- \Ex[Y]| \geq \eps \sqrt{d\mu_1})
& \leq &
  2 e^{-\frac13 \eps^2 d\, \mu_1/\lambda} + d_{TV}(Y, \Po(\lambda))\\
& \leq &
   e^{-(\frac13 +o(1)) \eps^2 d} + O(dq^d).
\end{eqnarray*}
\smallskip

The remaining theorems concern more local behaviour.
The first counts small components by size.  It is needed in order to prove the earlier theorems. Recall that $X_t$ is the number of components of size $t$ in 
 $Q_p$, and $\mu_t= \Ex[X_t]$. 
We noted earlier that $\mu_1=(2q)^d$.  It is not hard to give exact formulae also for $\mu_2$ and $\mu_3$ (assuming $d \geq 2$), namely
\begin{equation} \label{eqn.mu2mu3}
\mu_2 = (p/2q^2) \, d\, (2q^2)^d \;\;\; \mbox{ and } \;\;\; \mu_3 = (p^2/2q^4) \,  d(d\!-\!1)\, (2q^3)^d
\end{equation}
(see also the discussion following Theorem~\ref{thm.C5}).

\begin{theorem} \label{cor.normal}
Fix $0<p<\frac12$, let $q=1-p$, and let $1\le t\le \M$.
Then the following results concerning the number $X_t$ of components of size $t$ in $Q_p=Q_p^d$ hold, as $d \to \infty$.
\begin{enumerate}[(a)]
\item 
$\mu_t =(1+O(\frac1{d})) \, \tfrac{t^{t-2}}{t!} (\tfrac{p}{q^2})^{t-1} \, d^{t-1} (2q^t)^d$ 
and $\Var(X_t) = (1+ O(d^t q^{td})) \mu_t$.

\item For each $\eps >0$, we have
$|X_t-\mu_t| < \eps \sqrt{d \mu_t}$ wvhp, and so also $|X_t-\mu_t| < \eps \mu_t$ wvhp.
\item $d_{TV}(X_t,\Po(\mu_t))=O(d^tq^{td})$, and 
$X_t^*$ is asymptotically standard normal.
\end{enumerate}
\end{theorem}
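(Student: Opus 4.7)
The plan is to write $X_t$ as a sum of indicators over candidate vertex sets and apply the Chen-Stein method to obtain the Poisson approximation in (c); the variance formula in (a) will drop out of the same covariance computation, and the concentration in (b) will then follow from (c) together with a Chernoff tail bound for $\Po(\mu_t)$. For each $t$-element $S\subseteq V(Q^d)$ with $Q^d[S]$ connected, let $I_S=\ind[S \text{ is exactly the vertex set of a component of }Q_p]$ and $\pi_S=\Pro(I_S=1)$. Writing $\partial(S)$ for the number of edges of $Q^d$ from $S$ to $V(Q^d)\setminus S$, one has $\pi_S = q^{\partial(S)}\,\Pro(Q_p[S]\text{ is connected})$ and $\mu_t=\sum_S \pi_S$.

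The leading contribution to $\mu_t$ comes from $S$ with $Q^d[S]$ a tree: such $S$ satisfies $\partial(S)=td-2(t-1)$ and $\pi_S = p^{t-1}q^{td-2(t-1)}=(p/q^2)^{t-1}q^{td}$. I would count these sets via Cayley's formula: there are $t^{t-2}$ labelled trees on $[t]$, and each admits $2^d d^{t-1}(1+O(1/d))$ embeddings into $Q^d$ (choose a root, then label each tree-edge by a coordinate in $[d]$; only an $O(t^2/d)$ fraction use a repeated coordinate, and distinct labels always produce a $t$-vertex image whose induced subgraph in $Q^d$ is exactly the tree). Dividing by $t!$ and multiplying by $\pi_S$ yields the leading term $\tfrac{t^{t-2}}{t!}(p/q^2)^{t-1}d^{t-1}(2q^t)^d$. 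Connected sets whose induced subgraph has $k\ge 1$ extra edges (necessarily arising from repeated coordinates, since $Q^d$ has girth $4$) are rarer by a factor $\Theta(1/d^k)$, while $\pi_S$ grows only by the constant $(p/q^2)^k$, so these contribute an $O(1/d)$ relative correction.

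For (c) and the variance, apply Chen-Stein with dependency neighborhood $N_S=\{S'\ne S : S\cap S'\ne\emptyset \text{ or some edge of }Q^d\text{ joins }S\text{ and }S'\}$; then $I_S$ is independent of $\{I_{S'}:S'\notin N_S\cup\{S\}\}$. The Barbour-Holst-Janson bound~\cite{BHJ} gives $d_{TV}(X_t,\Po(\mu_t))\le \min(1,1/\mu_t)(b_1+b_2)$ with $b_1 = \sum_S\pi_S\sum_{S'\in N_S\cup\{S\}}\pi_{S'}$ and $b_2 = \sum_S\sum_{S'\in N_S}\Pro(I_S I_{S'}=1)$. Using $|S\cup N(S)|=O(td)$, that each vertex of $Q^d$ lies in $O(d^{t-1})$ connected $t$-sets, and $\max\pi_{S'}=O(q^{td})$, one has $|N_S|=O(d^t)$ and $\sum_{S'\in N_S}\pi_{S'}=O(d^t q^{td})$; for disjoint $S,S'$ joined by $k\ge 1$ edges of $Q^d$, $\Pro(I_S I_{S'}=1)=\pi_S\pi_{S'}q^{-k}$, and for overlapping $S\ne S'$ it vanishes. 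Since $\#\{S'\in N_S : k(S,S')=k\}=O(d^{t-k+1})$, the $q^{-k}$ factors sum geometrically and $b_2/\mu_t=O(d^t q^{td})$ as well, proving the $d_{TV}$ bound; asymptotic normality of $X_t^*$ then follows from inequality~(1.39) of~\cite{BHJ} since $\mu_t\to\infty$. The same decomposition of $\sum_{S\ne S'}\Cov(I_S,I_{S'})$---overlapping pairs contributing $-\pi_S\pi_{S'}$ totalling $O(\mu_t^2/2^d)$, disjoint edge-adjacent pairs contributing positively with total $O(d^t q^{td})\mu_t$, and disjoint non-adjacent pairs zero by independence---yields the variance formula in (a). For (b): a Chernoff bound gives $\Pro(|\Po(\mu_t)-\mu_t|>\eps\sqrt{d\mu_t})=e^{-\Omega(d)}$, and since $d^t q^{td}=e^{-\Omega(d)}$ the $d_{TV}$ bound of (c) then gives $\Pro(|X_t-\mu_t|>\eps\sqrt{d\mu_t})=e^{-\Omega(d)}$, which is the wvhp statement of (b); the second wvhp bound follows from $\sqrt{d\mu_t}=o(\mu_t)$.

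The main technical obstacle is the combinatorial bookkeeping for connected $t$-subsets of $Q^d$: extracting Cayley's constant $t^{t-2}/t!$ cleanly in the leading tree term, and showing that sets whose induced subgraph contains an extra cycle contribute a genuinely $O(1/d)$ relative correction uniformly over the excess structure. Once this is secured, the Chen-Stein and Chernoff calculations are routine.
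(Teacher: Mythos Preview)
Your approach is essentially the paper's: Chen--Stein on the natural dependency graph (vertex sets that meet or are $Q^d$-adjacent), Cayley's formula for the leading spreading-tree contribution to $\mu_t$, and Chernoff plus the total-variation bound for part~(b). The only structural difference is that the paper first proves the finer statement (Theorem~\ref{thm.C5}/Lemma~\ref{lem.new9a}) indexed by \emph{subgraphs} in ambient-isomorphism classes and then specialises to $X_t$, whereas you index directly by \emph{vertex sets} $S$; for counting $X_t$ the two indexings sum to the same random variable, and the $\Delta^{\pm}$/$b_1,b_2$ estimates go through identically.

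Two small technical remarks on your sketch. First, the assertion that sets with $k\ge 1$ extra induced edges are rarer by a factor $\Theta(1/d^k)$ is not quite how the saving works: what drops is the \emph{span} (any cycle forces a repeated coordinate, so $\s(S)\le t-2$), and by Lemma~\ref{lem:span} the number of such $S$ is $O(2^d d^{t-2})$, giving an $O(1/d)$ relative correction regardless of $k$ --- which is all you need. Second, the refined count $\#\{S'\in N_S:k(S,S')=k\}=O(d^{t-k+1})$ is plausible but not needed (and not obvious for general $k$): the cruder bound $|N_S|=O(d^t)$ together with $q^{-k}\le q^{-t^2}=O(1)$ already gives $b_2/\mu_t=O(d^t q^{td})$, exactly as in the paper's proof of~(\ref{eqn.D+}).
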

\noindent
Observe from part (a) that $\mu_t = \Omega(d)$ since $2q^t \geq 1$ (and indeed $\mu_t \gg d$ unless $p=1\!-\!1/\sqrt{2}$ and $t=m_p = 2$), so the first half of part (b) above implies the second half.
For a partial local limit result corresponding to part~(c), see Proposition~\ref{prop.local} at the end of Section~\ref{3sec:smallcomp}.

These results help us to visualise the asymptotic disappearance of small components in $Q_p$ as $p$ increases from 0 to 1/2. For each fixed $p$, there are wvhp a giant component and many small components of every size 
 up to a  maximum size $\M$.  In particular $\mu_t \to \infty$ as $d \to \infty$ for each $t \leq \M$. We noted that $\M$ is large for small $p$ and decreases to 1 as $p$ increases to 1/2. The typical number of components decreases exponentially as $p$ increases and the maximum size $L_2$ of a component of the fragment drops as $1/\log_2(1/q)$ falls below each integer value. In particular, the last components of size 2 disappear as $p$ increases past $1\!-\!1/\sqrt{2} \approx 0.29$ and the last isolated vertices disappear as $p$ increases past 1/2.
We recall that $Q_{1/2}$ is connected with probability tending to $e^{-1}$ as $d\rightarrow\infty$. Indeed, whp $Q_{1/2}$ consists of $X$ isolated vertices and a connected component of $2^d-X$ vertices, where $X$ has mean value 1 and asymptotic distribution  $\Po(1)$ (see \cite{79}).
\bigskip

\noindent
{\bf Ambient isomorphisms}

We shall in fact prove a much finer and more detailed version of Theorem~\ref{cor.normal}, namely Theorem~\ref{thm.C5}, which uses a natural restricted version of isomorphism for subgraphs of the cube, so that we can consider also how components `sit' in the host hypercube.  We then deduce Theorem~\ref{cor.normal} from Theorem~\ref{thm.C5}. 
 
We call a graph a \emph{cube subgraph} if it is a subgraph of the cube $Q^d$ for some $d$. Let $H$ be a connected cube subgraph. The \emph{support} $S(H)$
is the set of indices $i$ such that there is an edge $xy$ in $H$ with $x_i=0$ and $y_i=1$ (that is, $H$ meets both top and bottom faces in the $i$-th coordinate direction). 
Call $|S(H)|$ the \emph{span} of $H$, denoted by $\s(H)$. Note that if $H$ consists of a single vertex then $\s(H)=0$, and otherwise $\s(H) \geq 1$. Indeed, if $v(H)$ is 1, 2 or 3 then $\s(H)=v(H)-1$, whereas for example if $H$ is a 4-vertex path then $\s(H)$ could be 2 or 3.

The \emph{canonical copy} $H^*$ of $H$ is defined as follows.
If $H$ is a single vertex then its canonical copy is the graph $Q^0$ (consisting of a single vertex).
Suppose that $H$ has at least one edge, so $s:=\s(H) \geq 1$.  Let 
$\phi$ be the increasing injection from $[s]$ to $[d]$ with image $S(H)$. Given $x=(x_1,x_2,\ldots,x_d) \in Q^d$ let  $ \phi(x)=(x_{\phi(1)}, x_{\phi(2)},\ldots, x_{\phi(s)}) \in Q^s.$
Then the vertices of the canonical copy $H^*$ are the points $\phi(x)$ where $x$ is a vertex of $H$; and the edges of $H^*$ are the pairs $\phi(x) \phi(y)$ such that $xy$ is an edge of $H$. (Note that the canonical copy is a subgraph of $Q^s$.) See Figure \ref{3fig:Hcopy} for an illustration.

\floatstyle{plain}
\restylefloat{figure}
\begin{figure}[h]
\small
\centering
\begin{tikzpicture}[show background rectangle, scale=1.1, >=stealth]
\fill[black] (0,0) circle(1pt) node[below]{\mbox{\footnotesize$(0,0,0)$}};
\fill[black] (1,0) circle(1pt);
\draw (1.1,0) node[below]{\mbox{\footnotesize$(1,0,0)$}};
\fill[black] (0,1) circle(1pt);
\fill[black] (1,1) circle(1pt);
\fill[black] (0.6,0.2) circle(1pt);
\fill[black] (0.6,1.2) circle(1pt);
\fill[black] (1.6,0.2) circle(1pt) node[right]{\mbox{\footnotesize$(1,1,0)$}};
\fill[black] (1.6,1.2) circle(1pt) node[right]{\mbox{\footnotesize$(1,1,1)$}}; 
\draw [dotted](0,0)--(0,1);
\draw [dotted](0,0)--(1,0);
\draw [dotted](0,1)--(1,1);
\draw [dotted](1,0)--(1,1);
\draw [dotted](0.6,0.2)--(0.6,1.2);
\draw [dotted](0.6,0.2)--(1.6,0.2);
\draw [dotted](0.6,1.2)--(1.6,1.2);
\draw [thick](1.6,0.2)--(1.6,1.2);
\draw [dotted](0,0)--(0.6,0.2);
\draw [dotted](0,1)--(0.6,1.2);
\draw [thick](1,0)--(1.6,0.2);
\draw [dotted](1,1)--(1.6,1.2);
\draw [->] (-2,0)--(-1,0);
\draw [->] (-2,0)--(-2,1);
\draw [->] (-2,0)--(-1.4,0.2);
\draw (-1.5,0) node[below]{\mbox{\footnotesize$1$}};
\draw (-2,0.5) node[left]{\mbox{\footnotesize$3$}};
\draw (-1.4,0.2) node[right]{\mbox{\footnotesize$2$}};
\draw [thick] (4,0)--(5,0)--(5,1);
\draw [dotted] (4,0)--(4,1)--(5,1);
\draw (4,0) node[below]{\mbox{\footnotesize$(0,0)$}};
\draw (5,0) node[below]{\mbox{\footnotesize$(1,0)$}};
\draw (5,1) node[right]{\mbox{\footnotesize$(1,1)$}};
\draw (4.2,0.5) node[right=0cm]{\mbox{\footnotesize$H^*$}};
\draw (1.6,0.6) node[right=0cm]{\mbox{\footnotesize$H$}};
\draw [->] (2.2,0.6)--(3.5,0.6);
\draw (2.9,0.6) node[above=0.02cm]{\mbox{\footnotesize$\phi$}};
\draw (-3,-1) node[right=0cm]{\mbox{\footnotesize$V(H)=\{(1,0,0),(1,1,0),(1,1,1)\}$}};
\draw (6,0.7) node[right=0cm]{\mbox{\footnotesize$S=\{2,3\}$}};
\draw (6,0.2) node[right=0cm]{\mbox{\footnotesize$\phi(1)=2,\,\phi(2)=3$}};
\draw (4,-1) node[right=0cm]{\mbox{\footnotesize$V(H^*)=\{(0,0),(1,0),(1,1)\}$}};
\end{tikzpicture}
\normalsize
\caption{\mdseries A subgraph $H$ of $Q^3$ with canonical copy $H^*$ in $Q^2$ \normalsize}
\label{3fig:Hcopy}
\end{figure}
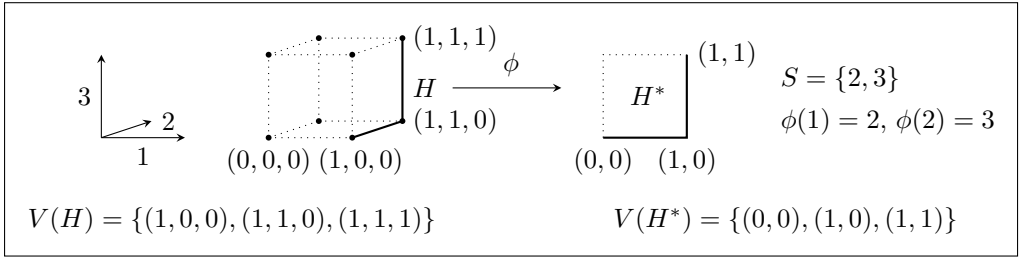
\floatstyle{boxed}
\restylefloat{figure}

We say that connected subgraphs $H_1$ of $Q^{d_1}$ and $H_2$ of $Q^{d_2}$ are \emph{ambient isomorphic} if they have the same canonical copy. Of course, if $H_1$ and $H_2$ are ambient isomorphic then they are isomorphic, but this definition is stronger in that it requires the copies to `sit in the cube' in the same way. For example, let $O$ denote the zero $d$-vector and let $e_k$ denote the $k$th unit $d$-vector: if $i<j$ then the three vertex path $O,e_i,e_i+e_j$ in $Q^d$ has canonical copy the path $(0,0),(1,0),(1,1)$ in $Q^2$ as in Figure \ref{3fig:Hcopy}, and so the original path in $Q^d$ is not ambient isomorphic to the path $O,e_j,e_i+e_j$ which has canonical copy the path $(0,0),(0,1),(1,1)$.
There are four ambient isomorphism classes of three-vertex paths. Observe that if $s=\s(H)$ then there is a unique subgraph of $Q^s$ ambient isomorphic to $H$ (namely the canonical copy of $H$).

Our fifth theorem concerns numbers of components ambient isomorphic to given connected cube subgraphs $H_i$.  Note that any two subcubes of $Q^d$ with the same dimension are ambient isomorphic.  Weber~\cite{Weber1987} considered Poisson convergence of the number of subcube components of $Q^d_p$ of a given dimension, for a range of values of $p$ which could depend on $d$.  Here we keep $p$ fixed, but we consider all kinds of components.
Recall that $(d)_k$ means $d(d-1) \cdots(d-k+1)$.

\begin{theorem}\label{thm.C5} \label{3t:2}
Let  $0<p<1/2$ and $q=1-p$. 
 Let $r \ge 1$ and let $H_1,H_2,\dots,H_r$ be pairwise non-ambient-isomorphic connected cube subgraphs each with at most $\M$ vertices. Let $t=\min_{i\in[r]}v(H_i)$ and $s=\max\{\s(H_i):v(H_i)=t\}$. (All these quantities are fixed, not depending on d.)
 
 For each $i$, let $Y_i=Y_i(d)$ be the (random) number of components of $Q_p^d$ ambient-isomorphic to $H_i$.  
 Let $Y=Y(d)=\sum_{i}Y_i$ and let $\lambda=\lambda(d)=\Ex[Y]$. Then the following hold.
\begin{enumerate}[(a)]
\item  There is a constant $c>0$, given explicitly in equations~(\ref{eqn.beta}) and~(\ref{eqn.betasum}) below, such that
$\, \lambda = (1+O(1/d)) \, c \, (d)_s (2q^t)^d$;
and if $t$ is 1, 2 or 3 then $s=t-1$, and we may replace the error bound $O(1/d)$ by $O(dq^d)$.
Also $\Var(Y) = (1+ O(d^tq^{td}))\, \lambda$.

\item For each $\eps >0$, we have 
$|Y-\lambda| < \eps \sqrt{d \lambda}$ wvhp, and so also $|Y-\lambda| < \eps \lambda$ wvhp.

\item
$d_{TV}(Y,\Po(\lambda))=O(d^tq^{td})$, and 
$Y^*$ is asymptotically standard normal.

\end{enumerate}
\end{theorem}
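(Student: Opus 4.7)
The plan is to write $Y = \sum_\alpha I_\alpha$, where $\alpha$ indexes the ambient-isomorphic copies of $H_1,\dots,H_r$ in $Q^d$ and $I_\alpha$ is the indicator that copy $\alpha$ is exactly a component of $Q_p^d$, then to analyse its moments and apply the Chen--Stein method in the Arratia--Goldstein--Gordon form with a local dependency structure. An ambient-iso copy of $H_i$ is determined by its support $S \in \binom{[d]}{s_i}$ together with the common values of the $d-s_i$ coordinates outside $S$, giving exactly $\binom{d}{s_i} 2^{d-s_i}$ copies. Each copy is a component iff (i) its $e(H_i)$ edges are present, (ii) the $e^+(H_i)-e(H_i)$ other hypercube edges with both endpoints in the copy are absent, and (iii) the $d\,v(H_i)-2e^+(H_i)$ hypercube edges leaving the copy are absent, where $e^+(H_i)$ counts edges of $Q^{s_i}$ between vertices of $H_i^*$; these events involve disjoint edge sets, so $\Ex[I_\alpha]$ factors into a closed-form product depending only on the type. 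Summing, the dominant terms in $\lambda$ are those with $v(H_i)=t$ and $s_i=s$, yielding $(1+O(1/d))\,c\,(d)_s\,(2q^t)^d$; for $t\in\{1,2,3\}$ the connected cube subgraphs of size $t$ are forced ($K_1$, $K_2$, or the $3$-path up to ambient isomorphism), so $(d)_{s_i}$ is exact and the only error, $O(dq^d)$, comes from contributions with $v(H_j)>t$.

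For the variance, split $\Var(Y)=\sum_\alpha \Ex[I_\alpha](1-\Ex[I_\alpha])+\sum_{\alpha\ne\beta}\Cov(I_\alpha,I_\beta)$: the diagonal is $\lambda(1-o(1))$; pairs with $V(\alpha)\cap V(\beta)\ne\emptyset$ have $I_\alpha I_\beta=0$ since components are vertex-disjoint; pairs with $V(\beta)$ disjoint from $V(\alpha)\cup N(V(\alpha))$ are independent because their dependent edge sets are disjoint; and for vertex-disjoint but boundary-adjacent pairs one has the exact identity $\Ex[I_\alpha I_\beta]=\Ex[I_\alpha]\Ex[I_\beta]\,q^{-e(V(\alpha),V(\beta))}$, which corrects for edges of $Q^d$ between the two copies that would be boundary edges of both. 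Counting such pairs against this weight yields total covariance $O(\lambda \cdot d^t q^{td})$, so $\Var(Y)=(1+O(d^t q^{td}))\lambda$. The same decomposition gives the dependency neighborhood $B_\alpha=\{\beta: V(\beta)\cap(V(\alpha)\cup N(V(\alpha)))\ne\emptyset\}$, outside which $I_\alpha$ is independent of the other indicators. Arratia--Goldstein--Gordon then bounds $d_{TV}(Y,\Po(\lambda))\le 2\min(1,1/\lambda)(b_1+b_2)$ with the standard sums $b_1=\sum_\alpha \Ex[I_\alpha]\sum_{\beta\in B_\alpha}\Ex[I_\beta]$ and $b_2=\sum_\alpha\sum_{\beta\in B_\alpha\setminus\{\alpha\}}\Ex[I_\alpha I_\beta]$. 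Using $|V(\alpha)\cup N(V(\alpha))|=O(d)$, the fact that each hypercube vertex lies in $O(d^s)$ dominant-type copies (so $|B_\alpha|=O(d^{s+1})$), and the cube-subgraph inequality $s\le t-1$ (which follows from $v(H)\le 2^{\s(H)}$ because $V(H)$ lies in a subcube of dimension $\s(H)$), I obtain $b_1,b_2=O(\lambda\cdot d^t q^{td})$ and hence the claimed TV bound.

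Part (b) is then deduced not from Chebyshev, which only gives a whp conclusion, but from part (c) combined with the Chernoff tail $\Pr[|\Po(\lambda)-\lambda|>\eps\sqrt{d\lambda}]\le 2e^{-\eps^2 d/3}$ for large $\lambda$, both errors being $e^{-\Omega(d)}$. Asymptotic normality of $Y^*$ follows from part (c) and $\lambda\to\infty$ via inequality~(1.39) of~\cite{BHJ}, as already noted in the paper. The main technical obstacle is the off-diagonal moment estimate $\Ex[I_\alpha I_\beta]$ for vertex-disjoint, boundary-adjacent pairs: this single combinatorial input controls both the variance correction and the Chen--Stein term $b_2$, and one must track the correction factor $q^{-e(V(\alpha),V(\beta))}$ carefully against the count of such pairs in order to land on the sharp error $O(d^t q^{td})$ rather than a weaker polynomial-in-$d$ estimate.
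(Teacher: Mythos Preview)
Your approach is essentially the paper's own (packaged there as Lemma~\ref{lem.new9a}, with the Stein--Chen step quoted as Theorem~6.23 of~\cite{JLR} rather than Arratia--Goldstein--Gordon, but these are the same bound). Two small corrections, neither fatal to the strategy:

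(i) Your justification of $s\le t-1$ is backward: the inequality $v(H)\le 2^{\s(H)}$ gives $\s(H)\ge\log_2 v(H)$, a \emph{lower} bound on span. The correct argument is that a spanning tree of a connected $H$ has $v(H)-1$ edges, and each edge lies in one coordinate direction, so $|S(H)|\le v(H)-1$.

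(ii) The cardinality bound $|B_\alpha|=O(d^{s+1})$ is not right in general, since a non-dominant type $H_j$ with $v(H_j)>t$ can have span $s_j>s$ (e.g.\ a spreading tree on $t+1$ vertices has span $t$). You should instead sum $\sum_{\beta\in B_\alpha}\Ex[I_\beta]$ by type: the number of type-$j$ copies through a given vertex is $O(d^{s_j})$, so the type-$j$ contribution is $O(d^{s_j+1}q^{v(H_j)d})=O((dq^d)^{v(H_j)})$ using $s_j\le v(H_j)-1$ from (i). Since $dq^d\to 0$, this is dominated by the types with $v(H_j)=t$, giving $O(d^t q^{td})$ as required. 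The paper handles this via Lemma~\ref{lem.fewtrees}, bounding the number of $t_2$-vertex connected subgraphs through a given vertex by $(ed)^{t_2-1}$ directly, which sidesteps the span bookkeeping.
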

\noindent
By part (a), $\lambda$ is $\Omega(d)$ (and indeed $\lambda$ is $\Omega(d^2)$ except if $p=1-1/\sqrt{2}$ and $t=m_p =2$), so the first half of part (b) implies the second half (as with Theorem~\ref{cor.normal}).
See Lemma~\ref{lem.new9a} for a fuller version of Theorem~\ref{thm.C5}, which considers more information about the components counted.  That lemma, together with the estimates of $\mu_t$ from Lemma~\ref{3lem:Xi}, will yield Theorem~\ref{cor.normal},
by letting $H_1,\ldots,H_r$ list all the $t$-vertex connected canonical cube subgraphs, so that the random variable $Y$ in Theorem~\ref{thm.C5} is $X_t$.

The constant $c$ in part (a) may be specified as follows.
Let $I^*=\{i \in [r] :  v(H_i)=t,\, \s(H_i)=s \}$.  For each $i \in I^*$, let $e'(H_i)$ be the number of edges of $Q^d$ not in $H_i$ but with both end vertices in $H_i$, and let
\begin{equation} \label{eqn.beta}
\beta_i = \frac1{2^{s} s!} \, \bigg(\frac{p}{q^2}\bigg)^{e(H_i)} \, \bigg(\frac1{q}\bigg)^{e'(H_i)}.
\end{equation}
Now let
\begin{equation} \label{eqn.betasum}
c= \sum_{i \in I^*} \beta_i.
\end{equation}
If $t=1$ then $c=1$.  If $t=2$ then $c= p/2q^2$, so $\lambda \sim (p/2q^2) \, d (2q^2)^d$.
If $t=3$ then $1 \leq |I^*| \leq 4$ and each $\beta_i= \tfrac18 (p/q^2)^2$, so if $|I^*|=4$ we have $\lambda \sim (p^2/2 q^4) \, d^2 (2q^3)^d$. These results are in accord with~(\ref{eqn.mu2mu3}). 

In Theorem~\ref{cor.normal} we saw that wvhp in $Q_p$ there are components of each size up to $\M$.  In Theorem~\ref{3t:2} we see in much more detail that each connected cube subgraph of size at most $\M$, with its way of sitting within the host hypercube, appears wvhp as a component of $Q_p$.

What we call ambient isomorphism could be called `ordered ambient isomorphism', since we insist that the injection $\phi$ in the definition is increasing.  If we drop this requirement then essentially the same results hold (mutatis mutandis), since the new isomorphism classes are unions of the old ones.  When we deduce Theorem~\ref{cor.normal} from Theorem~\ref{3t:2}/Lemma~\ref{lem.new9a}, we may think of this as relaxing \emph{ambient isomorphism} all the way to \emph{isomorphism}.

Given a connected cube subgraph $H$, let $p_H = p_H(d)$ be the probability that $Q_p$ has a component ambient isomorphic to $H$.
When $p$ is fixed with $0<p<\frac12$, by Theorem~\ref{3t:2}, either $p_H$ or $1-p_H$ is $e^{-\Omega(d)}$. 
To see this, let $t=v(H)$, let $Y$ be the number of components ambient isomorphic to $H$ 
and $\lambda= \Ex[Y]$. If $t>\M$ then $2q^t<1$, so $\pr(Y \geq 1) \leq \lambda =e^{-\Omega(d)}$; and if $t \leq \M$ then $\lambda \to \infty$ (as we saw above), and by part (b) of Theorem~\ref{thm.C5} wvhp $Y \geq \lambda/2 >0$.  The situation described above is in contrast with the situation at $p=\tfrac12$, when (as we noted earlier) the number of isolated vertices has asymptotic distribution $\Po(1)$.  
\medskip

\noindent
\emph{Joint distribution of components} 

We saw in Theorem~\ref{cor.normal} that, for each $t=1,\ldots,\M$ the number $X_t$ of components of $Q_p$ of size $t$ has close to the Poisson distribution $\Po(\mu_t)$, where $\mu_t = \Ex[X_t]$. 
In fact more is true: the joint distribution of $X_1,\ldots,X_{\M}$ is close to a product of these distributions.
Write ${\cal L}(X_1,\ldots,X_{\M})$ for the joint law of $X_1,\ldots,X_{\M}$; and write $\prod_{j=1}^{\M} \Po(\mu_j)$ for the joint distribution of independent random variables $\Po(\mu_j)$.  We shall see that
\begin{equation}\label{eqn.jointXt}
  d_{TV}\big({\cal L}(X_1,\ldots,X_{\M}),\prod_{j =1}^{\M} \Po(\mu_j)\big) =O(d^2q^d).
\end{equation}
Thus, the numbers of components  in the fragment of each size $t$ are asymptotically independent, with a Poisson distribution for $t \leq \M$, and identically 0 for $t>\M$.
Indeed, we have the following much more detailed theorem concerning the small components, in the spirit of Theorem~\ref{thm.C5}.
Note that there is a finite set of canonical cube subgraphs with at most $\M$ vertices.
\begin{theorem}\label{thm.jointdistrib}
  Let $H_1,\ldots,H_r$ be a list of $r \geq 1$ distinct canonical cube subgraphs each with at most $\M$ vertices. For each $j\in[r]$, let $Y_j$ be the random number of components of $Q_p = Q_p^d$ ambient isomorphic to $H_j$, with mean $\lambda_j$.  Let $t^*=\min_j v(H_j)$. Then
\begin{equation}\label{eqn.jointYi}
  d_{TV}\big({\cal L}(Y_1,\ldots,Y_{r}),\prod_{j=1}^{r} \Po(\lambda_j)\big) =O(d^{t^*+1} q^{t^* d}).
\end{equation}
\end{theorem}
When the $H_j$ include all the canonical cube subgraphs of size up to $\M$ (so $t^*=1$), Theorem~\ref{thm.jointdistrib} directly implies~(\ref{eqn.jointXt}). We cannot quite use Theorem~\ref{thm.jointdistrib} to deduce our earlier individual bounds on $d_{TV}$, for example on $d_{TV}(X_t, \Po(\mu_t))$ in Theorem~\ref{cor.normal} part (c), since in the bound~(\ref{eqn.jointYi}) there is an `extra' factor $d$.
\bigskip

\noindent
\bfseries{Notation }\mdseries
\smallskip

We use standard notation throughout.  For non-negative functions $f$ and~$g$, we say that $f(d)=\Omega(g(d))$ if $\liminf_{d\to\infty} f(d)/g(d)>0$, and
$f(d)=\Theta(g(d))$ if both $f(d)=\Omega(g(d))$ and $g(d)=\Omega(f(d))$.  Also, we write $f\ll g$ if $f(d)=o(g(d))$. 
\bigskip

\noindent
\bfseries{Plan of the paper }\mdseries
\smallskip

Section~\ref{sec.prelim} gives preliminary results, first concerning subgraphs in the hypercube $Q^d$, and then concerning the variance of counting random variables and their closeness to a Poisson distribution.
In Section \ref{3sec:smallcomp}, 
Lemma~\ref{lem.new9a} gives several results concerning numbers of components ambient-isomorphic to a given list of connected cube subgraphs.
Lemma \ref{3lem:Xi} gives quite precise results on the expected value 
of $X_t$ for $1 \le t \le \M$. These lemmas allow us to prove Theorem~\ref{3t:2}, and then Theorem~\ref{cor.normal}, at the end of the section.

In order to prove Theorems~\ref{thm.C1},~\ref{thm.C2} and~\ref{thm.C3}
we must show that with tiny failure probability there is just one component of size strictly greater than $m_p$. 
To do this, in Section~\ref{sec.nolarge} we call a vertex `good' if its degree in $Q_p$ is at least half the expected value $dp \,$.  We show that, with tiny failure probability, all good vertices are in the same component; and then deduce that, for a suitable constant $N$, with tiny failure probability each component of the fragment has size at most~$N$.  From this result, we see in particular that wvhp $m_p$ is an upper bound for the size $L_2$ of a second largest component. 
In Section~\ref{sec.lastproofs} we complete the proofs of Theorems~\ref{thm.C1},~\ref{thm.C2} and ~\ref{thm.C3}.
In Section~\ref{sec.joint} we consider joint distributions and prove Theorem~\ref{thm.jointdistrib}.
Finally, Section~\ref{sec.conclusion} contains some very brief concluding remarks. 

These investigations arose from work on multicommodity flows in the cube $Q^d$ when edges have independent random capacities, see~\cite{firstpaper}.

\section{Preliminary results}\label{sec.prelim}

\subsection{Preliminary results on the hypercube $Q^d$}
\label{subsec.prelimh}

Let us first consider $\s(H)$ for a {connected} cube subgraph $H$.
We have already noted that $\s(H) = v(H)-1$ if $v(H)$ is 1, 2 or 3.
It is easy to see that always $\s(H) \leq v(H) -1$, and the inequality is strict if $H$ is not a tree (since any cycle contains at least two edges in some dimension).  If we have equality we call $H$ a \emph{spreading tree}.
Note that each edge of a spreading tree sits in a distinct dimension, and if $T_1$ and $T_2$ are ambient isomorphic trees then $T_1$ is spreading if and only if $T_2$ is spreading.

What are the subcubes in $Q^d$?
If we are given $S \subseteq [d]$ and ${z} \in \{0,1\}^{[d] \setminus S}$, then clearly the vertices ${x}$ 
such that $x_j=z_j$ for each $j \in [d]\setminus S$ form a subcube isomorphic to $Q^{|S|}$. 
We shall need to consider such `cylinder' subcubes, for example in the proof of Lemma~\ref{lem:span}.  As an aside, let us note that each cube subgraph $H$ isomorphic to a hypercube $Q^s$ is obtained in this way.  Since $v(H)=2^s$, this is easily seen to be equivalent to showing that $H$ has span $s$; and it is a straightforward exercise to show the latter.

\begin{proposition} \label{prop.subcubenew}
Let $H$ be a subgraph of $Q^d$ isomorphic to a hypercube $Q^s$.  Then $\s(H)=s$.\qed
\end{proposition}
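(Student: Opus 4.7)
The plan is to establish the two inequalities $\s(H)\ge s$ and $\s(H)\le s$ separately. The first is immediate: since $H\cong Q^s$, every vertex $u\in V(H)$ has exactly $s$ neighbours in $H$, and each differs from $u$ in a distinct coordinate of $Q^d$; these $s$ coordinates all lie in $S(H)$, so $\s(H)\ge s$.

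For the upper bound, I would fix a vertex $u\in V(H)$ and let $j_1,\ldots,j_s\in[d]$ be the coordinates of the $s$ edges of $H$ at $u$. The goal is to show, by induction on $\ell\ge 0$, the statement $P(\ell)$: for every $w\in V(H)$ with $d_H(u,w)\le\ell$, every $H$-neighbour of $w$ differs from $w$ in a coordinate in $\{j_1,\ldots,j_s\}$. Once $P(\ell)$ holds for all $\ell$, connectivity of $H$ forces $S(H)\subseteq\{j_1,\ldots,j_s\}$, giving $\s(H)\le s$.

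The base $P(0)$ is the definition of the $j_k$. For the inductive step, take $w$ with $d_H(u,w)=\ell+1$ and a ``parent'' $w''\in V(H)$ with $w''w\in E(H)$ and $d_H(u,w'')=\ell$; by $P(\ell)$ one has $w=w''\oplus e_{j_{k_0}}$ for some $k_0$. Given any other $H$-neighbour $y=w\oplus e_{j'}$ of $w$, the two edges $ww''$ and $wy$ of $H\cong Q^s$ lie in a common $4$-cycle $w,y,z,w''$ of $H$. The key geometric fact I will invoke is that every $4$-cycle in $Q^d$ spans exactly two coordinate directions (it is a $2$-dimensional face), which forces $z=w''\oplus e_{j'}$. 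Then $z$ is an $H$-neighbour of $w''$, and applying $P(\ell)$ at $w''$ yields $j'\in\{j_1,\ldots,j_s\}$, completing the step.

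The main obstacle is arranging the induction so that the hypothesis is applied at the parent $w''$ (at distance $\ell$) rather than at $w$ itself; the uniqueness of $4$-cycles in $Q^d$ is then the mechanism that transports the initial set of $s$ neighbour-coordinates at $u$ to every other vertex of $H$. As a by-product, the argument actually shows that $V(H)$ is the cylinder subcube $u\oplus\{\sum_{k\in T}e_{j_k}:T\subseteq[s]\}$, a stronger conclusion than $\s(H)=s$.
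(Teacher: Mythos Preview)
Your argument is correct. The paper does not actually prove this proposition --- it is stated with an immediate \qed\ and described in the surrounding text as ``a straightforward exercise'' --- so there is no proof to compare against. Your inductive approach via the $4$-cycle rigidity of $Q^d$ (every $4$-cycle spans exactly two coordinate directions, so the fourth vertex $z$ of the $H$-square on $w,y,w''$ must equal $w''\oplus e_{j'}$) is a clean way to carry out the exercise, and the by-product that $V(H)$ is a cylinder subcube is precisely the stronger statement the paper mentions just before the proposition.
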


Next we investigate the number $n_H=n_H(d)$ of subgraphs of $Q^d$ ambient-isomorphic to a given subgraph $H$, the number of subgraphs which are spreading trees of a given size $t$, and the total number of connected subgraphs of size $t$. 

\begin{lemma}\label{lem:span}
\begin{enumerate}[(a)]
	\item For each connected subgraph $H$ of $Q^d$,
$n_H=2^{d-s}\binom{d}{s}$, where $\s(H)=s$.
\item For each $d \geq t-1 \geq 0$, the number of ambient-isomorphism classes of spreading trees of size $t$ in $Q^d$ is $2^{t-1}t^{t-3}$.
\item For each $d \geq t-1 \geq 0$, the number of subgraphs of $Q^d$ which are spreading trees of size $t$ is
		$\, 2^d \, t^{t-3} \binom{d}{t-1}$.
\item For each fixed $t \geq 1$, the number of connected subgraphs of $Q^d$ of size $t$ is $\, 2^d \, t^{t-3}\binom{d}{t-1}(1+O(d^{-1}))$.
\end{enumerate}
\end{lemma}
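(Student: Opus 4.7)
For part (a), I would use the observation that if $H$ is connected with $\s(H)=s$, then all vertices of $H$ agree on every coordinate outside the support $S(H)$ (since every edge of $H$ flips some coordinate in $S(H)$, and $H$ is connected). An ambient-isomorphic copy of $H$ in $Q^d$ is then specified by picking the new support $S'\subseteq [d]$ of size $s$ (which, because $\phi$ is forced to be increasing, determines the embedding on the support coordinates uniquely) and picking a fixed binary string in $\{0,1\}^{[d]\setminus S'}$ for the remaining $d-s$ coordinates. Each such pair clearly produces a distinct copy, so $n_H=\binom{d}{s}\,2^{d-s}$.

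For part (b), I would first observe that a spreading tree of size $t$ sitting in $Q^{t-1}$ must have support $[t-1]$ (its $t-1$ edges lie in distinct coordinate directions and so force all of them), so the defining $\phi$ is the identity and the tree equals its own canonical copy; hence ambient-isomorphism classes of spreading trees of size $t$ correspond bijectively to spreading trees of size $t$ in $Q^{t-1}$. I would count the latter number $N$ by double counting (spreading tree, vertex)-pairs: if $N_0$ is the number through the origin, vertex-transitivity of $Q^{t-1}$ gives $tN=2^{t-1}N_0$. To compute $N_0$, a labeled tree on $[t]$ (with vertex $1$ identified with the origin) together with a bijection from its $t-1$ edges to $[t-1]$ uniquely determines a spreading tree through the origin by walking outward from vertex $1$ and flipping the prescribed coordinate along each edge, and each spreading tree through the origin arises from exactly $(t-1)!$ such pairs (the arbitrary labelings of its non-origin vertices by $\{2,\dots,t\}$). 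Cayley's formula then yields $t^{t-2}(t-1)!$ pairs in total, so $N_0=t^{t-2}$ and $N=2^{t-1}t^{t-3}$.

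Part (c) follows at once by multiplying parts (a) and (b), with the factor $2^{t-1}$ from (b) combining with $2^{d-(t-1)}$ from (a) to give $2^d$. For part (d), I would split connected size-$t$ subgraphs of $Q^d$ into spreading trees and everything else. By the discussion opening Section~\ref{subsec.prelimh}, any connected cube subgraph $H$ with $v(H)=t$ which is not a spreading tree satisfies $\s(H)\leq t-2$, and for fixed $t$ there are only boundedly many ambient-isomorphism classes of such $H$ (each embeds in $Q^{t-2}$, which is a finite graph). Applying part (a) to each such class yields $O(2^d d^{t-2})$ copies, and summing this error with the main term from (c) produces the claimed factor $(1+O(d^{-1}))$.

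The main obstacle is the careful counting in part (b): one must set up the correspondence between edge-decorated vertex-labeled trees (to which Cayley's formula applies) and the embedded spreading trees of $Q^{t-1}$, and identify the precise overcount factor of $(t-1)!$ coming from relabelings of the non-origin vertices. Once this is in place, the remaining parts follow by assembling (a), (b) and routine asymptotic bookkeeping.
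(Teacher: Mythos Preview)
Your proposal is correct and follows essentially the same line as the paper's proof: parts (a), (c), (d) are argued in virtually the same way, and part (b) rests on the same core idea of Cayley's formula together with interpreting edge labels as coordinate directions in $Q^{t-1}$. The only cosmetic difference is that the paper sets up a direct bijection in (b) --- it takes a labeled tree on $\{0,1,\dots,t-1\}$, roots it at $0$, and pushes each non-root vertex label onto the edge leading toward the root, thereby obtaining exactly $t^{t-2}$ rooted edge-labeled trees without any overcount-and-divide step --- whereas you reach the same count via vertex-transitivity and quotienting out the $(t-1)!$ relabelings of non-origin vertices.
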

We see from parts (c) and (d) above that the population of connected subgraphs of a given size $t$ in $Q^d$ is asymptotically dominated by spreading trees.
\begin{proof}
We first recall that any cube subgraph of size $t$ can be embedded in $Q^{t-1}$ and so, for $d\ge t-1$, 
 the number of pairwise non-ambient-isomorphic connected cube subgraphs of size $t$ depends only on $t$.
\smallskip

\noindent (a)	
There is a single ambient-isomorphic copy of $H$ in each (cylinder) subcube $Q^{s}$ of $Q^d$, and there are  $2^{d-s}\binom{d}{s}$ copies of $Q^{s}$ in $Q^d$, so $n_H=2^{d-s}\binom{d}{s}$, as required.
\smallskip

\noindent (b) 
By Cayley's formula there are $t^{t-2}$ trees on the set $\{0,1,2,\dots,t-1\}$ of $t$ vertices.	
Given one of these trees, call vertex $0$ the root and move the other vertex labels onto the edge leading towards the root. This constructs a vertex-rooted, edge-labeled tree, with edge-labels $1,2,\dots,t-1$. The construction is reversible, so there are exactly $t^{t-2}$ such trees.
	
Given such a rooted, edge-labeled tree $T$, we choose a vertex in $Q^{t-1}$ for the root, then use the labels of the edges to specify the `dimension' in which that edge exists. This defines a $t$-vertex rooted spreading tree, and all the rooted trees constructed are distinct; and furthermore every $t$-vertex rooted spreading tree in $Q^{t-1}$ can be constructed in this way. Thus there are $2^{t-1}t^{t-2}$ $t$-vertex rooted spreading trees in $Q^{t-1}$, and so $2^{t-1}t^{t-3}$ $t$-vertex unrooted spreading trees; and of these unrooted trees, no two distinct ones are ambient-isomorphic since they have span $t-1$ and so are their own canonical copies.
\smallskip

\noindent (c)	 
By parts (a) and (b), the number of $t$-vertex spreading trees in $Q^d$ is
\[ 2^{t-1} t^{t-3} \cdot 2^{d-(t-1)}\binom{d}{t-1} = 2^d t^{t-3} \binom{d}{t-1}.\]
\smallskip

\noindent (d)
If $T$ is a spreading tree of size $t$, and $H$ is a connected cube subgraph of size $t$ with $\s(H)<t-1= \s(T)$, then $n_H/n_T=O(d^{-1})$ by part (a). The number of ambient-isomorphism classes of connected subgraphs of $Q^d$ of size $t$ does not depend on $d$ for $d \geq t-1$; and thus the contribution to the total number of connected subgraphs of $Q^d$ of size $t$ by those with span less than $t-1$ is $O(d^{-1})$ of the total.
\end{proof}

We will need one more lemma which we will apply to the hypercube $Q^d$.  This result is `folk knowledge' (and indeed a more precise result is known,
see equation~(\ref{eqn.fdt})) but we give a short combinatorial proof here for completeness.
\begin{lemma}\label{lem.fewtrees}
Let the graph $G$ be rooted at vertex $r$ and have maximum degree at most $d$.  Then for each non-negative integer $t$, the number of subtrees containing~$r$ and exactly $t$ other vertices is at most $(ed)^t$.
\end{lemma}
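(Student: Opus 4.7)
The plan is to reduce the problem to counting subtrees of the infinite rooted $d$-ary tree $\mathcal T_d$ (every vertex, including the root, has exactly $d$ children), where the count admits a Fuss--Catalan closed form that is easily estimated.

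For the reduction, for each $v \in V(G)$ I fix an arbitrary injection $\iota_v$ from the neighbour set of $v$ in $G$ into $\{1,\dots,d\}$ (possible since $\deg_G(v) \le d$), and identify $\mathcal T_d$ with the set of finite words over $\{1,\dots,d\}$, the empty word being the root and the children of $w$ being $w\,1,\dots,w\,d$. Given a subtree $T$ of $G$ containing $r$, I then define $\phi_T\colon V(T)\to V(\mathcal T_d)$ recursively by $\phi_T(r)=\varnothing$ and $\phi_T(c)=\phi_T(v)\cdot\iota_v(c)$ whenever $c$ is a child of $v$ in $T$. The image $\tilde T := \phi_T(V(T))$ will span a subtree of $\mathcal T_d$ of the same size containing the root, and a short induction on depth, using the injectivity of each $\iota_v$, shows that the assignment $T\mapsto \tilde T$ is itself injective. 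So it suffices to bound the number of $(t+1)$-vertex subtrees of $\mathcal T_d$ that contain the root.

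For the count in $\mathcal T_d$, let $F(x)=\sum_{n\ge 1}f_n x^n$ enumerate such subtrees by size. Decomposing at the root (each of the $d$ child-slots is either empty or starts its own sub-subtree) gives the functional equation $F = x(1+F)^d$, and Lagrange inversion then yields $f_n = \tfrac{1}{n}\binom{dn}{n-1}$. Applying the standard bound $\binom{N}{k}\le (eN/k)^k$ with $N=d(t+1)$, $k=t$, I obtain
\[
f_{t+1} \;\le\; \frac{1}{t+1}\Big(\frac{ed(t+1)}{t}\Big)^{t} \;=\; \frac{(ed)^{t}}{t+1}\bigl(1+\tfrac{1}{t}\bigr)^{t} \;\le\; \frac{e\,(ed)^{t}}{t+1},
\]
which is at most $(ed)^t$ for every $t\ge 2$; the cases $t=0,1$ just give $f_1=1$ and $f_2=d$, both within the bound.

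The only delicate point is the injectivity of $T\mapsto \tilde T$, which reduces to a clean induction on depth using the injectivity of each $\iota_v$; the remaining steps are a standard Fuss--Catalan-type count and a routine binomial estimate.
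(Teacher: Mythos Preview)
Your proof is correct. The overall architecture matches the paper's: first embed the subtrees of $G$ rooted at $r$ injectively into the rooted subtrees of the infinite $d$-ary tree $\mathcal T_d$, then bound the latter count. Your direct embedding via the labelings $\iota_v$ is essentially a concrete realisation of the paper's use of the Godsil path tree $T(G,r)$; the induction on depth that you sketch does indeed give injectivity of $T\mapsto\tilde T$, since from $\tilde T$ one can reconstruct both $V(T)$ and the parent--child edges by applying $\iota_v^{-1}$ level by level.

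Where you diverge is in the second step. The paper bounds the number of $(t+1)$-vertex rooted subtrees of $\mathcal T_d$ by an elementary BFS-type encoding into a vector in $\{0,1\}^{td}$ with exactly $t$ ones, yielding $\binom{td}{t}\le(ed)^t$ directly. You instead set up the functional equation $F=x(1+F)^d$, apply Lagrange inversion to get the exact Fuss--Catalan count $f_{t+1}=\tfrac{1}{t+1}\binom{d(t+1)}{t}$, and then estimate. Your route gives a sharper intermediate expression (in fact the exact number), at the cost of invoking generating-function machinery; the paper's route is more self-contained but coarser. Both land cleanly at $(ed)^t$.
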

\begin{proof}
We first show (a) that  the number of $(t+1)$-vertex subtrees in $G$ containing $r$ is at most the number $f(d,t+1)$
 of $(t+1)$-vertex subtrees containing the root in an infinite $d$-ary tree $T^{\infty}$; and then show (b) that 
$f(d,t+1)$ is at most the number of points $x \in \{0,1\}^{td}$ with $t$ 1's.  The number of such points is $ \binom{td}{t} \leq (ed)^t$. Clearly we may assume that $t \geq 1$.

The \emph{path tree} $T(G,r)$ \cite{godsil81} has a vertex
for each path $P$ in $G$ from $r$, adjacent to each vertex corresponding to a path extending $P$ by one edge; and as the root has the vertex corresponding to the path with a single vertex $r$.  It is easy to see that, for each tree in $G$ containing $r$, there is a corresponding tree in $T(G,r)$ containing the root.  Thus the the number of $(t+1)$-vertex subtrees in $G$ containing $r$ is at most the number of $(t+1)$-vertex subtrees containing the root in $T(G,r)$; and since $T(G,r)$ embeds in $T^{\infty}$, part~(a) of the proof follows.

For part (b), let $T$ be a $(t\!+\!1)$-vertex subtree in $T^{\infty}$ containing the root.  We may suppose that $T^\infty$ is embedded in the plane, with the root at the top and children listed in order from left to right. We construct $x(T) \in \{0,1\}^{td}$ with $t$ 1's as follows.  Initially the vector $x$ is null and the list $L$ contains just the root.
We repeat the following $t$ times.  Remove the first vertex $v$ in $L$, and let $y \in \{0,1\}^d$ indicate its children (with a 1 for each child): append $y$ to $x$ and append the children to $L$ (listed in order). The output $x(T)$ is the final value of $x$. 
Clearly we can reconstruct $T$ from $x(T)$, so the number of possible trees $T$ is at most the number of possible vectors $x(T)$, which completes the proof.
\end{proof}
\noindent
We shall not use this result here, but the precise value of $f(d,t)$ is given by
\begin{equation} \label{eqn.fdt}
 f(d,t) = \frac1{(d-1)t+1} \binom{dt}{t} \;\;\; \mbox{ for each } d, t \geq 1 \,,
\end{equation}
see exercise 11 in~\cite[section 2.3.4.4]{Knuth} (pages 397 and 589).

\subsection{Preliminary results on variance and approximation to Poisson distribution}
\label{subsec.prelimvar}

Let $(A_i: i \in I)$ be a family of events with a dependency graph $L$ (so that $A_i$ and $A_j$ are independent if $i$ and $j$ are not adjacent in $L$ and $i \neq j$).  Write $i \sim j$ if $i$ and $j$ are adjacent in $L$. For each~$i$, let $\pi_i= \pr(A_i)$ and let $\ind_i$ be the indicator function of $A_i$.  Let $X=\sum_i \mathbb{I}_i$ (in this subsection we do not use $X$ as the number of components in $Q_p$). Then
\begin{align}
\Var(X) &=\sum_i\sum_j\left( \pr(A_i\land A_j) - \pi_i \pi_j \right) \nonumber\\
=&\sum_i (\pi_i - \pi_i^2)+\sum_i\sum_{j \sim i }(\Pro(A_i \land A_j) - \pi_i \pi_j )\nonumber\\
=& \, \Ex[X]+ \Delta^{+} - \Delta^-,
\label{eqn.var}
\end{align}
where
\begin{equation} \label{eqn.deltaplus}
\Delta^+ = \sum_i\sum_{j \sim i} \Pro(A_i \land A_j)
\end{equation}
and  
\begin{equation} \label{eqn.deltaminus}
\Delta^- = \sum_i \pi_i^2 + \sum_i\sum_{j \sim i}\pi_i \pi_j.
\end{equation}

The following lemma is essentially Theorem 6.23 of~\cite{JLR}, proved by the Stein-Chen method, which shows that a sum $X$ as above has close to a Poisson distribution, provided $\Delta^+$ and $\Delta^-$ are small.   
\begin{lemma} 
\label{lem.TV}  
With notation as above, and letting $\lambda=\Ex[X]$,
we have
\begin{eqnarray*}
d_{TV}(X,\Po(\lambda)) & \leq &
\min\{\lambda^{-1}\!,1\} \left( \Delta^+ + \Delta^- \right).
\end{eqnarray*}
\end{lemma}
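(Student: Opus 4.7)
The plan is to invoke the standard Stein--Chen Poisson approximation machinery for sums of indicators that admit a dependency graph, as developed (for example) in Chapter~6 of~\cite{JLR} or in~\cite{BHJ}. The result as stated is essentially Theorem~6.23 of~\cite{JLR}; most of the work is simply matching the quantities $\Delta^+$ and $\Delta^-$ defined above to the two ``coupling'' sums that arise in the Stein--Chen bound.

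First I would set up Stein's equation for $\Po(\lambda)$: for any $B \subseteq \mathbb{Z}_{\geq 0}$ let $f=f_{B,\lambda}$ be the bounded solution to
\[
\lambda f(k+1) - k f(k) = \ind_{\{k \in B\}} - \pr(\Po(\lambda) \in B), \qquad k \geq 0.
\]
The standard bound on the forward difference $\|\Delta f\|_\infty := \sup_k |f(k+1) - f(k)|$ gives $\|\Delta f\|_\infty \leq \min\{1, 1/\lambda\}$ (the classical Stein factor). Since
\[
d_{TV}(X, \Po(\lambda)) = \sup_B \bigl| \Ex\bigl[\lambda f(X+1) - X f(X)\bigr] \bigr|,
\]
everything reduces to bounding this expectation for each $f$ with the above Lipschitz control.

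Next I would decompose $X f(X) = \sum_i \ind_i f(X)$ and, for each $i$, split $X = X_i' + X_i''$, where $X_i' = \sum_{j \sim i \text{ or } j = i} \ind_j$ records the ``dependent'' contribution and $X_i''$ is independent of $\ind_i$ (using the dependency graph hypothesis). Writing $\lambda = \sum_i \pi_i$, a standard rearrangement gives
\[
\Ex[\lambda f(X+1) - X f(X)]
= \sum_i \pi_i \Ex\bigl[f(X_i'' + X_i' + 1) - f(X_i'' + 1)\bigr]
- \sum_i \Ex\bigl[\ind_i (f(X) - f(X_i'' + 1))\bigr].
\]
Bounding each difference $|f(a+m) - f(a)| \leq m \|\Delta f\|_\infty$ and using independence of $\ind_i$ from $X_i''$, the first sum is bounded by $\|\Delta f\|_\infty \sum_i \pi_i \, \Ex[X_i'] = \|\Delta f\|_\infty \bigl(\sum_i \pi_i^2 + \sum_i \sum_{j \sim i} \pi_i \pi_j\bigr)$, which is exactly $\|\Delta f\|_\infty \, \Delta^-$. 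The second sum is bounded by $\|\Delta f\|_\infty \sum_i \sum_{j \sim i} \pr(A_i \wedge A_j)$, which is exactly $\|\Delta f\|_\infty \, \Delta^+$.

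Combining these bounds with $\|\Delta f\|_\infty \leq \min\{1, 1/\lambda\}$ and taking the supremum over $B$ yields the claimed inequality. There is no serious obstacle: the only ``content'' is the Stein factor, which is a classical analytic estimate on the solution to the Stein equation, and the bookkeeping that turns the Stein--Chen neighborhood sums into exactly the quantities $\Delta^+$ and $\Delta^-$ defined in~(\ref{eqn.deltaplus}) and~(\ref{eqn.deltaminus}).
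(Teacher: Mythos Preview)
Your argument is correct and is precisely the standard Stein--Chen computation behind Theorem~6.23 of~\cite{JLR}, which is exactly what the paper invokes; the paper itself does not reprove the lemma but simply cites that reference. One small caveat worth making explicit: your step ``$X_i''$ is independent of $\ind_i$'' uses the \emph{strong} dependency-graph hypothesis (independence of $A_i$ from the full collection $\{A_j : j\not\sim i,\ j\neq i\}$), whereas the paper's wording only states pairwise independence---but the intended (and applied) notion is the strong one, matching~\cite{JLR}.
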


We shall also need a minor extension of the above.  Suppose that we are given a family $(t_i:i \in I)$ of positive integers, and let $\tilde{X}=\sum_i t_i  \mathbb{I}_i$. Then much as above, we have
\begin{align}
\Var(\tilde{X}) &= \sum_i\sum_j  t_it_j \left( \pr(A_i \land A_j) - \pi_i \pi_j \right) \nonumber\\
=& \sum_i t_i^2 (\pi_i - \pi_i^2) + \sum_i\sum_{j \sim i }  t_it_j  (\pr(A_i \land A_j) - \pi_i \pi_j )\nonumber\\
=& \, \Ex[\tilde{X}] + \tilde{\Delta}^+ - \tilde{\Delta}^- \label{eqn.varti}
\end{align}
where
\begin{equation} \label{eqn.Deltaplus}
 \tilde{\Delta}^+ =   \sum_i t_i(t_i-1) \pi_i + \sum_i\sum_{ j \sim i} t_it_j \, \pr(A_i \land A_j)
\end{equation}
and
\begin{equation} \label{eqn.Deltaminus}
\tilde{\Delta}^- = \sum_i t_i^2 \pi_i^2+\sum_i\sum_{j \sim i} t_it_j \, \pi_i \pi_j.
\end{equation}

\begin{lemma}\label{lem:TV2}
With notation as above, and letting $\lambda=\Ex[\tilde{X}]$, we have
\begin{eqnarray*}
d_{TV}(\tilde{X},\Po(\lambda))
& \leq & 
\min\{\lambda^{-1}\!,1\} \big( \tilde{\Delta}^+ + \tilde{\Delta}^- \big).
\end{eqnarray*}
\end{lemma}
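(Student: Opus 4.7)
The plan is to reduce Lemma~\ref{lem:TV2} to Lemma~\ref{lem.TV} by a ``splitting'' trick: replace each weighted event $(A_i, t_i)$ by $t_i$ identical copies, run Lemma~\ref{lem.TV} on the new family, and verify that the resulting $\Delta^+$ and $\Delta^-$ exactly match $\tilde\Delta^+$ and $\tilde\Delta^-$.

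More precisely, I would form the new index set $J = \{(i,k) : i \in I,\, 1 \le k \le t_i\}$ and define events $B_{(i,k)} = A_i$ for each $(i,k) \in J$, so that $\sum_{(i,k) \in J} \ind_{B_{(i,k)}} = \sum_i t_i \ind_i = \tilde X$ and $\Ex \sum_{(i,k)} \ind_{B_{(i,k)}} = \lambda$. A valid dependency graph $L'$ on $J$ is obtained by joining $(i,k)$ and $(i',k')$ whenever either $i \sim i'$ in $L$, or $i=i'$ with $k \neq k'$ (since copies sharing an index are fully dependent). Lemma~\ref{lem.TV} is then directly applicable to $(B_j : j \in J)$ with this dependency graph.

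The remaining step is a bookkeeping check. Fixing $(i,k) \in J$, the neighbours $(i',k')$ in $L'$ contribute $t_i - 1$ terms with $\pr(B_{(i,k)} \wedge B_{(i,k')}) = \pi_i$ (when $i'=i$, $k' \ne k$) and, for each $i' \sim i$ in $L$, a further $t_{i'}$ terms equal to $\pr(A_i \wedge A_{i'})$. Summing over $(i,k)$ gives exactly
\[ \Delta^{+}_{L'} = \sum_i t_i(t_i-1)\pi_i + \sum_i \sum_{j \sim i} t_i t_j\, \pr(A_i \wedge A_j) = \tilde\Delta^+, \]
and an entirely analogous accounting — using $\sum_{(i,k)} \pi_i^2 = \sum_i t_i \pi_i^2$ for the diagonal contribution — gives
\[ \Delta^{-}_{L'} = \sum_i t_i^2 \pi_i^2 + \sum_i \sum_{j \sim i} t_i t_j \pi_i \pi_j = \tilde\Delta^-. \]
Substituting into the conclusion of Lemma~\ref{lem.TV} applied to $\tilde X$ then yields the stated bound.

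There is no serious obstacle; the only thing to be a little careful about is verifying that $L'$ genuinely is a dependency graph (which is immediate, as joint independence of the $A_i$'s across non-adjacent blocks in $L$ trivially lifts to joint independence of the $B_{(i,k)}$'s across non-adjacent blocks in $L'$), and that the diagonal term $\sum_i t_i \pi_i^2$ from splitting combines with the $i'=i$, $k'\ne k$ contribution $\sum_i t_i(t_i-1)\pi_i^2$ to reproduce the $\sum_i t_i^2 \pi_i^2$ appearing in $\tilde\Delta^-$.
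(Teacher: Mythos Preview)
Your proposal is correct and follows exactly the same approach as the paper's proof: replace each $A_i$ by $t_i$ identical copies, note that copies of the same event must be mutually adjacent in the extended dependency graph, and apply Lemma~\ref{lem.TV}. The paper's proof is just a two-sentence sketch of this, whereas you have carried out the bookkeeping in full (and correctly).
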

\begin{proof}
  Replace each event $A_i$ by $t_i$ identical (not independent) copies.  Note that, for each $i$, the $t_i$ copies of $A_i$ are dependent, and so they are adjacent to each other in the natural extended dependency graph.  Now apply Lemma~\ref{lem.TV}.
\end{proof}


\section{The numbers of small components}\label{3sec:smallcomp}

The first lemma in this section, Lemma~\ref{lem.new9a}, gives expected values and variances for the numbers of small components in certain ambient-isomorphism classes, and for the number of vertices in such components; and gives some results on approximation by a Poisson distribution.
The second lemma uses Lemma~\ref{lem.new9a}, together with counting results from Subsection~\ref{subsec.prelimh}, to deduce results corresponding to those in Lemma~\ref{lem.new9a} when we consider \emph{all} components of a given size.
Using these lemmas we prove Theorem~\ref{thm.C5}  and then Theorem~\ref{cor.normal}. 

In Lemma~\ref{lem.new9a}, we consider both the numbers of components in $Q_p$ ambient isomorphic to given graphs, and the total numbers of vertices in such components. 

\begin{lemma}\label{lem.new9a}
Let $0<p<\frac 12$ and let $q=1-p$. Let $r$ be a positive integer and let $H_1,H_2,\dots,H_r$ be pairwise non-ambient-isomorphic connected cube subgraphs.
For each $i \in [r]$, let $s_i=\s(H_i)$, and recall that $e'(H_i)$ is the number of cube edges not in $H_i$ but with both end vertices in $H_i$. (All these quantities are fixed, not depending on d.)

For each $i \in [r]$, let $Y_i$ be the number of components of $Q_p$ ambient-isomorphic to $H_i$.
Let $t=\min_i v(H_i)$, and let $s= \max \{s_i: v(H_i)=t\}$.  Let $I^*=\{i \in [r] :v(H_i)=t, s_i=s\}$, and let
\[ c= \frac1{2^s s!} \sum_{i \in I^*} (p/q^2)^{e(H_i)} q^{-e'(H_i)}.\]
 Then the following hold.
\begin{enumerate}[(a)]
\item  For each $i \in [r]$, once $d \geq s_i$ we have
\[ \Ex[Y_i]= (p/q^2)^{e(H_i)} q^{-e'(H_i)} \, 2^{d-s_i}\binom{d}{s_i}q^{v(H_i) d}.\] 

\item
The sum $\, Y=\sum_{i=1}^r Y_i \,$ satisfies \,
(i) \, $\Ex[Y] = (1+ O(1/d))\, c\, (d)_s (2q^t)^d$, 
(ii) $\Var(Y)=  (1+O(d^{t} q^{t d})) \, \Ex[Y]$, and (iii) $d_{TV}(Y, \Po(\Ex[Y])) = O(d^{t} q^{t d})$.
Furthermore, if $t$ is 1, 2 or 3 then $s=t-1$ and in the expression for $\Ex[Y]$ we can improve the error term, 
so $\Ex[Y] = (1+ O(dq^d))\, c\, (d)_{t-1} (2q^t)^d$.

\item
The weighted sum $\tilde{Y} = \sum_{i=1}^r v(H_i) Y_i$ satisfies 
(i) $\Ex[\tilde{Y}]= (1+O(dq^d)) \, t \, \Ex[Y]$.
Furthermore, if $t=1$ then (ii) $\Var(\tilde{Y})= (1+O(dq^d)) \, \Ex[\tilde{Y}]$ and  (iii) $d_{TV}(\tilde{Y}, \Po(\Ex[\tilde{Y}])) = O(dq^d)$.

\end{enumerate} 
\end{lemma}

\begin{proof}
\noindent
(a) 
Consider a fixed graph $H_i$.  Let $G$ be a subgraph of $Q^d$ which is ambient-isomorphic to $H_i$, and let $A$ be the event that the subgraph of $Q_p$ induced by the vertices of $G$ is exactly $G$, and it is also a component of $Q_p$. Then 
\begin{equation} \label{eqn.prAk}
 \Pro(A)= p^{e(H_i)} q^{e'(H_i)} q^{v(H_i) d-2e(H_i)-2e'(H_i)} = (p/q^2)^{e(H_i)} q^{-e'(H_i)}  q^{v(H_i) d}.
\end{equation}
Hence, by Lemma~\ref{lem:span} part (a)
\[ \Ex[Y_i]= 2^{d-s_i}\binom{d}{s_i} \,\, (p/q^2)^{e(H_i)} q^{-e'(H_i)} \,  q^{v(H_i) d},\]
completing the proof of part (a).
\medskip

\noindent (b)
Observe from part (a) that $\Ex[Y_i] = \Theta\big(d^{s_i} (2q^{v(H_i)})^d \big)$.  Thus the dominant contribution to $\Ex[Y]$ is from graphs $H_i$ with $i \in I^*$ (for if $i \in I^*$ and $j \in I \backslash I^*$, then $\Ex[Y_j]= O(1/d) \, \Ex[Y_i]$).  Using part (a) we now see that
\[ \Ex[Y] = (1+ O(1/d))\, c (d)_s (2q^t)^d = (1+ O(1/d))\, c d^s (2q^t)^d.\]
Now suppose that $t$ is 1, 2 or 3.  If $i \in I^*$ and $j \in I \backslash I^*$, then $v(H_j) >t$ so $\Ex[Y_j]= O(dq^d) \, \Ex[Y_i]$ (note that if $v(H_j)=t+1$ then $s_j \leq s+1$).
Hence
$\Ex[Y] = (1+ O(dq^d))\, c (d)_s (2q^t)^d$.
(If $t \geq 4$ then there could be $t$-vertex graphs $H_i$ with different spans, and if one has span $s-1$ then $\Ex[Y] = (1+ \Theta(1/d))\, c (d)_s (2q^t)^d$.)
\medskip

Now we prove parts (b)(ii) and (b)(iii).
Given $d$, let $\cS=\cS(d)$ be the set of subgraphs of $Q^d$ ambient isomorphic to one of the graphs $H_1,\ldots,H_r$. List the members of $\cS$
as $G_1,\ldots,G_N$ (where $N=N(d)$); and let $A_i$ be the event that $G_i$ is a component of $Q_p$.  For distinct $i,j \in [N]$ let $i \sim j$ if either the vertex sets $V(G_i)$ and $V(G_j)$ intersect or there is an edge of $Q^d$ between them.  Observe that if $i \neq j$ and $i \not\sim j$ then the events $A_i$ and $A_j$ are independent, so we have a dependency graph. Now by~(\ref{eqn.var})
$\Var(Y)= \Ex[Y] + \Delta^{+} - \Delta^-$,
where  $\Delta^{+}$ and $\Delta^-$ are defined in~(\ref{eqn.deltaplus})  and~(\ref{eqn.deltaminus}) respectively.  We next bound $\Delta^+$ then $\Delta^-$.

If $i \ne j$ and the vertex sets $V(G_i)$ and $V(G_j)$ intersect, then $\Pro(A_i \land A_j)=0$, so in the sum for $\Delta^+$ in~(\ref{eqn.deltaplus}) we need consider only the case where the two vertex sets $V(G_i)$ and $V(G_j)$ are disjoint but have connecting edges in $Q^d$ (of which there can be at most $v(G_i) v(G_j)$).  By~(\ref{eqn.prAk}), there is a constant $\alpha$ such that
\begin{equation} \label{eqn.pmax}
\pr(A_i) \leq \alpha \, q^{v(G_i)d} \;\; \mbox{ for each } i.
\end{equation}
Thus, if $i \ne j$ then 
\begin{equation} \label{eqn.pmax2}
 \pr(A_i \land A_j) \leq \pr(A_i) \, \pr(A_j) \, q^{-v(G_i) v(G_j)}
\leq  \pr(A_i)\, \alpha q^{v(G_j) d} q^{-v(G_i) v(G_j)}.
\end{equation}

For each integer $k$ let $h(k)$ be the number of graphs $H_i$ in the list with $v(H_i)=k$.
Observe that for each set $W$ of $k$ vertices of $Q^d$, there are at most $h(k)$ graphs $G_j$ with vertex set $W$, and there are no such graphs $G_j$ if the induced subgraph $Q^d[W]$ of $Q^d$ on $W$ is not connected.
For a given graph $G_i$ of size $t_1$, the number of vertices $v$ in $Q^d$ adjacent  to vertices in $G_i$ is at most $t_1 d$.  By Lemma~\ref{lem.fewtrees}
each vertex $v$ is in at most $(ed)^{t_2-1}$ sets $W$ of $t_2$ vertices such that the induced subgraph $Q^d[W]$ is connected.
But each such vertex set $W$ is the vertex set of at most $h(t_2)$ graphs $G_j$. Thus each vertex $v$ could be in at most $(ed)^{t_2-1} h(t_2)$ graphs $G_j$ of size $t_2$.
  In the sums below, $t_1$ and $t_2$ run over the possible sizes of the graphs $G_i$ and $G_j$.
From the definition~(\ref{eqn.deltaplus}), and using~(\ref{eqn.pmax2}) and the last observation, we have
\begin{eqnarray*}
\Delta^+ & = &
\sum_{t_1} \sum_{t_2} \sum_{i: v(G_i)=t_1} \sum_{j: j \sim i, v(G_j)=t_2} \pr(A_i \land A_j)\\
& \leq &
\sum_{t_1} \sum_{t_2} \sum_{i: v(G_i)=t_1} \pr(A_i) (t_1 d) (ed)^{t_2 -1} h(t_2) \, \alpha q^{t_2 d} q^{-t_1t_2}\\
& \leq &
(1+o(1)) \sum_{t_1} \sum_{i: v(G_i)=t_1} \pr(A_i) (t_1 d) (ed)^{t-1} h(t) \, \alpha q^{td} q^{-t_1 t}\\
& = &
\Ex[Y] \, O(d^t q^{td}), 
\end{eqnarray*}
that is
\begin{equation} \label{eqn.D+}
\Delta^+ = \Ex[Y] \, O(d^t q^{td}).
\end{equation}

Now consider $\Delta^-$.  By~(\ref{eqn.pmax})
\[ \sum_i \pr(A_i)^2 \leq \sum_i \pr(A_i) \cdot \alpha q^{td} = \Ex[Y] \cdot \alpha q^{td},\]
and, as for $\Delta^+$ except without the factor $q^{-t_1t_2}$
(also including pairs $i,j$ with $V(G_i) \cap V(G_j) \neq \emptyset$), we have
\[ \sum_{i} \sum_{j \sim i} \pr(A_i) \pr(A_j) 
= \Ex[Y] \, O(d^t q^{td});\]
thus
\begin{equation} \label{eqn.D-}
\Delta^- = \Ex[Y] \, O(d^t q^{td}).
\end{equation}
Now that we have~(\ref{eqn.D+}) and (\ref{eqn.D-}), 
from~(\ref{eqn.var}) we have $\Var(Y)= \Ex[Y](1+O(q^{td}d^t))$, and by Lemma~\ref{lem.TV}
we have $d_{TV}(Y, \Po(\Ex[Y])) = O(d^{t} q^{t d})$, as required.
\medskip

\noindent
(c)
The contribution to $\Ex[Y]$ from graphs $H_i$ with $v(H_i)>t$ is $O(dq^d) \cdot \Ex[Y]$, and similarly for $\Ex[\tilde{Y}]$.  This gives equation (c)(i).

For parts (c) (ii) and (iii), we may argue as for parts (b) (ii) and (iii), but using Lemma~\ref{lem:TV2} instead of Lemma~\ref{lem.TV}. Assume that $t=1$. Let $G_i$ and $A_i$ be as before, and let $t_i=v(G_i)$.  Then $\tilde{Y}= \sum_{i=1}^r t_i \ind_{A_i}$. Since the $t_i$ are uniformly bounded, the quantity $\tilde{\Delta}^-$ (as in~(\ref{eqn.Deltaminus})) is at most a constant times the unweighted version $\Delta^-$, 
and similarly for the second term in $\tilde{\Delta}^+$ (as in~(\ref{eqn.Deltaplus})).  For the first term in $\tilde{\Delta}^+$, there is no contribution from the isolated vertices (graphs $G_i$ with $t_i = 1$),
 so the term is $O(d (2q^2)^{d})$\,: but $\Ex[Y] \geq \mu_1 = (2q)^d$, so the term is $O(\Ex[Y] \, d q^{d})$.  Hence by~(\ref{eqn.D+}) and~(\ref{eqn.D-}), both $\tilde{\Delta}^+$ and $\tilde{\Delta}^-$ 
are $O(\Ex[Y] \, d q^{d})$.  
Equation~(\ref{eqn.varti}) 
 and Lemma~\ref{lem:TV2} now complete the proof.
\end{proof}

Recall that $X_t$ denotes the number of components of size $t$ in $Q_p$, and that $\mu_t=\Ex[X_t]$.   We noted earlier (more than once) that $\mu_1=(2q)^d$, and the precise values of $\mu_2$ and $\mu_3$ are given in~(\ref{eqn.mu2mu3}). 

\begin{lemma}\label{3lem:Xi}
Let $0<p<\frac 12$ and let $q=1-p$. Let $t \geq 1$ be fixed.  Then
\[ \mu_t = (1+O(\tfrac1{d}))  \, \tfrac{t^{t-2}}{t!} \, (\tfrac{p}{q^2})^{t-1} d^{t-1} (2q^t)^d = \Theta(d^{t-1} (2q^t)^d).\]
\end{lemma}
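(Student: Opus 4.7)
The plan is to sum the per-class expected counts given by Lemma~\ref{lem.new9a}(a) over all ambient-isomorphism classes of $t$-vertex connected cube subgraphs, and to observe that the sum is dominated by the contribution from spreading trees.

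By linearity of expectation, $\mu_t$ equals the sum, over all ambient-isomorphism classes $[H]$ of $t$-vertex connected cube subgraphs, of
\[ (p/q^2)^{e(H)} \, q^{-e'(H)} \, 2^{d-\s(H)} \binom{d}{\s(H)} q^{td}. \]
For $d \ge t-1$ the number of such classes depends only on $t$, and any class with $\s(H) \le t-2$ contributes only $O\!\bigl(d^{t-2}(2q^t)^d\bigr)$. Hence the total contribution from classes that are not spreading trees is a bounded multiple of $d^{t-2}(2q^t)^d$, which is $O(1/d)$ times the main term and can be absorbed into the error.

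For a spreading tree $H$ of size $t$ we have $e(H)=t-1$ and $e'(H)=0$: the former because a tree on $t$ vertices has $t-1$ edges, and the latter because, along the unique $u$--$v$ path in $H$, each edge toggles a distinct coordinate, so the $Q^d$-Hamming distance between $u$ and $v$ equals their distance in $H$; in particular, $u$ and $v$ are adjacent in $Q^d$ iff they are adjacent in $H$. Combining the $2^{t-1} t^{t-3}$ ambient-isomorphism classes from Lemma~\ref{lem:span}(b) with the per-class contribution $(p/q^2)^{t-1}\,2^{d-t+1}\binom{d}{t-1}q^{td}$ gives total spreading-tree contribution
\[ t^{t-3}\,(p/q^2)^{t-1}\,\binom{d}{t-1}\,(2q^t)^d. \]

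Finally, using $\binom{d}{t-1} = \frac{d^{t-1}}{(t-1)!}\bigl(1+O(1/d)\bigr)$ together with the identity $\frac{t^{t-3}}{(t-1)!} = \frac{t^{t-2}}{t!}$ yields the stated asymptotic for $\mu_t$; and since the leading coefficient is a positive constant depending only on $t$ and $p$, the $\Theta\bigl(d^{t-1}(2q^t)^d\bigr)$ bound follows automatically. The only non-trivial observation is that $e'(H)=0$ for spreading trees, which is the main obstacle one must notice to get the clean answer; the rest is bookkeeping with Lemmas~\ref{lem.new9a} and~\ref{lem:span}.
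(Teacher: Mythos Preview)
Your proof is correct and follows essentially the same route as the paper's: sum the per-class expectations from Lemma~\ref{lem.new9a}(a), use Lemma~\ref{lem:span}(b) to count the $2^{t-1}t^{t-3}$ spreading-tree classes, and note that the remaining classes have span at most $t-2$ and so contribute only an $O(1/d)$ fraction. Your explicit justification that $e'(H)=0$ for a spreading tree (via the observation that Hamming distance equals tree distance) is a nice addition that the paper leaves implicit.
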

\begin{proof}
If $H_j$ is a spreading tree of size $t$, then $\s(H_j) =t-1$ and $e'(H_j)=0$, and so
by Lemma~\ref{lem.new9a} (a),
\begin{equation}
\Ex[Y_j] = (p/q^2)^{t-1} 2^{d-t+1}\binom{d}{t\minus 1}q^{td},
\label{3eq:spreading}
\end{equation}
 where $Y_j$ is the number of components of $Q_p$ ambient-isomorphic to $H_j$.
To calculate $\mu_t$ we need to sum $\Ex[Y_j]$ over all the ambient-isomorphism classes of $t$-vertex connected cube subgraphs $H_j$. We see from Lemma~\ref{lem.new9a} (a) (and equation~(\ref{3eq:spreading})) that if $H_j$ is a spreading tree and $H_{j'}$ is not (so $\s(H_{j'}) \leq t-2$) 
then $\Ex[Y_{j'}] = O(d^{-1})\, \Ex[Y_j]$.  Thus the only significant terms are those corresponding to ambient-isomorphism classes of spreading trees, 
and by Lemma~\ref{lem:span} (b) there are $2^{t-1}t^{t-3}$ such classes. Hence 
\begin{eqnarray*} 
 \mu_t & = &
 (1+O(\tfrac{1}{d})) \, 2^{t-1}t^{t-3} \, 2^{d-t+1}\binom{d}{t\minus 1}q^{td}(p/q^2)^{t-1}\\
 & = & (1+O(\tfrac{1}{d})) \, d^{t-1} (2q^t)^d (t^{t-3}/(t\!-\!1)!)  (p/q^2)^{t-1},
\end{eqnarray*}
as required.
\end{proof}

Let us now complete the proof of Theorem~\ref{3t:2} and then of Theorem~\ref{cor.normal}.
\smallskip

\begin{proof}[Proof of Theorem~\ref{3t:2}]
In part (a), the expected value is from Lemma~\ref{lem.new9a} part~(b)(i), 
and the variance is from Lemma~\ref{lem.new9a} part (b)(ii); and the first half of part (c) (on Poisson approximation) is from Lemma~\ref{lem.new9a} part (b)(iii).

Consider part (b).
By a Chernoff bound (see for example~inequality~(2.9) and Remark~2.6 of~\cite{JLR}),
\begin{eqnarray*} 
\pr(| Y\!-\!\lambda| \geq \eps  (d \lambda)^{\frac12})
& \leq &
\pr(|\Po(\lambda) - \lambda| \geq \eps (d \lambda)^{\frac12}) + d_{TV}(Y, \Po(\lambda))\\
& \leq &
2 e^{- \eps^2 d/3} + O(d^t q^{td}), 
\end{eqnarray*}
by the Poisson approximation bound.
Thus
$\, \pr(| Y\!-\!\lambda| \geq \eps  (d \lambda)^{\frac12}) =  e^{-\Omega(d)}$, as required.

Finally, consider the second half of part (c).  Since as $d \to \infty$ we have $\lambda \to \infty$, $d_{TV}(Y,\Po(\lambda)) \to 0$ and $\Var(Y) \sim \lambda$, it follows that $Y^*$ 
is asymptotically standard normal -- see the discussion before Theorem~\ref{thm.C3}.
This concludes the proof of Theorem \ref{3t:2}.
\end{proof}

\begin{proof}[Proof of Theorem~\ref{cor.normal}]
The expression for the mean $\mu_t$ in part (a) is from Lemma~\ref{3lem:Xi}.  The rest follows directly from Theorem~\ref{thm.C5}, with $H_1,\ldots,H_r$ listing a representative of each ambient-isomorphism class of $t$-vertex connected cube subgraphs.
\end{proof}
\begin{remark} \label{remark}
In Theorem~\ref{3t:2} it was natural to restrict our attention to connected graphs $H_i$ with at most $m_p$ vertices, and similarly in Theorem~\ref{cor.normal} it was natural to restrict our attention to components with at most $m_p$ vertices.  However, both these theorems are based on Lemma~\ref{lem.new9a} in which there are no such restrictions.  Thus in fact both these theorems hold without any such restrictions on the numbers of vertices, apart from in the two places in each theorem where we need the expected value $\lambda$ to be large, namely the second half of part (b) and the second half of part (c) (in each of Theorems~\ref{cor.normal} and~\ref{3t:2}).  We shall use this remark in the proof of Theorem~\ref{thm.C1}.
\end{remark}

We have now proved Theorem~\ref{cor.normal}, which says in particular that the distribution of the number $X_t$  of components in $Q_p$ of size $t$ is close to the Poisson distribution $\Po(\mu_t)$.  From what we have already proved, we can quickly give a first corresponding local limit result, showing that for suitable $t$ we have $\Pro(X_t=\nu) \sim \Pro(\Po(\mu_t)=\nu)$ uniformly over the `central range' of integers $\nu$.
Recall from Theorem~\ref{cor.normal} that $\mu_t = \Theta(d^{t-1} (2q^t)^d)$.
\begin{proposition} \label{prop.local} 
Let $0<p<1/2$ and let $t$ be an integer with $\M/3 < t \le \M$. Then for any fixed $c>0$
\[ \sup_{\nu} \big|\Pro(X_t=\nu)/\Pro(\Po(\mu_t)=\nu) \, -\!1 \big|  = e^{-\Omega(d)}\]
where the $\sup$ is over integers $\nu$ with $|\nu-\mu_t| \leq c\sqrt{\mu_t}$.
\end{proposition}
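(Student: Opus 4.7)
The plan is to combine the total-variation estimate of Theorem~\ref{cor.normal}(c) with a standard pointwise lower bound on the Poisson mass function in its central range, and then verify that the resulting ratio error is $e^{-\Omega(d)}$ precisely under the hypothesis $t > \M/3$.

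First, since Theorem~\ref{cor.normal}(a) gives $\mu_t = \Theta(d^{t-1}(2q^t)^d) \to \infty$, a standard Stirling-based local estimate for the Poisson distribution yields a uniform lower bound $\Pro(\Po(\mu_t)=\nu) \geq c'/\sqrt{\mu_t}$, for some $c'=c'(c)>0$, valid for all integers $\nu$ with $|\nu-\mu_t|\leq c\sqrt{\mu_t}$; this is just the Gaussian approximation $e^{-(\nu-\mu_t)^2/(2\mu_t)}/\sqrt{2\pi\mu_t}$ evaluated in the central window. In parallel, the pointwise consequence of Theorem~\ref{cor.normal}(c) is
$$|\Pro(X_t=\nu) - \Pro(\Po(\mu_t)=\nu)| \leq 2\, d_{TV}(X_t,\Po(\mu_t)) = O(d^t q^{td})$$
uniformly in $\nu$. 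Dividing gives a ratio discrepancy of order $d^t q^{td}\sqrt{\mu_t}$, which after substitution becomes $O\big(d^{(3t-1)/2}(2q^{3t})^{d/2}\big)$.

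The punchline is an arithmetic identity: the hypothesis $t > \M/3$, for integers $t,\M$, is equivalent to $3t \geq \M+1$, and by the defining property of $\M$ we have $2q^{\M+1} < 1$; hence $2q^{3t} < 1$. Thus $(2q^{3t})^{d/2}$ decays exponentially in $d$, absorbing the polynomial prefactor $d^{(3t-1)/2}$, and so the ratio error is $e^{-\Omega(d)}$ uniformly over $|\nu-\mu_t| \leq c\sqrt{\mu_t}$, as required.

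I do not expect a serious obstacle; the whole argument reduces to lining up the total-variation bound, the Stirling lower bound, and the threshold equivalence $2q^{3t} < 1 \Leftrightarrow 3t > \M$. The one point meriting real care is that the hypothesis $t > \M/3$ is tight in the intended sense: it is exactly what forces the exponential factor $(2q^{3t})^{d/2}$ to beat the Poisson central value $\Theta(\mu_t^{-1/2})$, so relaxing it to $t > \M/k$ for any $k > 3$ would break the estimate and presumably require strengthening the total-variation bound of Theorem~\ref{cor.normal}(c) beyond what the Stein--Chen input of Lemma~\ref{lem.TV} yields.
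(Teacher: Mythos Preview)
Your argument is correct and follows essentially the same route as the paper: bound the pointwise difference by $O(d^t q^{td})$ via the total-variation estimate of Theorem~\ref{cor.normal}(c), divide by the Stirling lower bound $\Pro(\Po(\mu_t)=\nu)=\Theta(\mu_t^{-1/2})$ in the central range, and then check that $d^t q^{td}\sqrt{\mu_t}=O\big(d^{O(1)}(2q^{3t})^{d/2}\big)$ is $e^{-\Omega(d)}$ exactly when $2q^{3t}<1$, i.e.\ when $t>\M/3$. Your integer argument $3t\geq \M+1\Rightarrow 2q^{3t}\leq 2q^{\M+1}<1$ is a slightly crisper way of stating the threshold condition than the paper's, but the content is identical.
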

\begin{proof} 
Note first that
$\Pro(\Po(\mu_t)=\nu)=\Theta(\mu_t^{-\frac 12})$, uniformly over integers $\nu$ with $|\nu-\mu_t| \leq c\sqrt{\mu_t}$.  By Theorem~\ref{cor.normal} part (c), $d_{TV}(X_t,\Po(\mu_t))=O(d^t q^{td})$, so $|\Pro(X_t=\nu)-\Pro(\Po(\mu_t)=\nu)|=O(d^t q^{td})$ uniformly over integers $\nu$; and hence 
\[ |\Pro(X_t=\nu)/\Pro(\Po(\mu_t)=\nu) \,\, -1|=O(d^t q^{td}\mu_t^{1/2}),\]
uniformly over integers $\nu$ with $|\nu-\mu_t| \leq c\sqrt{\mu_t}$. But $d^t q^{td}\mu_t^{1/2}=O(d^{3t/2}(2q^{3t})^{d/2})=o(1)$ provided $2q^{3t}<1$. Finally, we have $2q^{3t}<1$ if $t > \M/3$ (and indeed if $t=\M/3$ unless $(2q)^{\M}=1$).
\end{proof}


\section{The fragment $\cZ$ has no large components}
\label{sec.nolarge}

It will be straightforward to handle components of any fixed size $t>{\M}$. We need to show also that wvhp there are no components in $\cZ$ larger than some constant size (see Lemma~\ref{lem.L2leqN} below).  We use two preliminary lemmas.  Given a spanning subgraph $Q'$ of $Q$, call a vertex $Q'$-\emph{good} if its degree in $Q'$ is at least $dp/2$ and \emph{bad} otherwise. 

\begin{lemma}
The probability that there is a pair of $Q_p$-good vertices at distance at most 3 in $Q$ which are not joined by a path of length at most 7 in $Q_p$ is  $2^{-\Omega(d^2)}$.
\end{lemma}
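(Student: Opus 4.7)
The approach is a union bound. There are at most $2^d\cdot O(d^3)$ pairs $(u,v)$ with $d_Q(u,v)\leq 3$, so it suffices to show that for each fixed such pair the probability that both $u$ and $v$ are $Q_p$-good yet are joined by no $Q_p$-path of length at most $7$ is $2^{-\Omega(d^2)}$; the union bound then yields $2^d\cdot O(d^3)\cdot 2^{-\Omega(d^2)}=2^{-\Omega(d^2)}$.

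Fix such a pair with $d_Q(u,v)=r\in\{1,2,3\}$ (the case $r=0$ is trivial), and let $D=\{i_1,\dots,i_r\}$ be the set of coordinates on which $u$ and $v$ differ. First condition on the $Q_p$-edges incident to $u$ or $v$, equivalently on $N_u=N_{Q_p}(u)$ and $N_v=N_{Q_p}(v)$. If $|N_u|<dp/2$ or $|N_v|<dp/2$ then $u$ or $v$ is bad and the event fails, so assume $|N_u|,|N_v|\geq dp/2$ and put $J_u=\{a:u\oplus e_a\in N_u\}$, $J_v=\{b:v\oplus e_b\in N_v\}$. All remaining $Q_p$-edges are mutually independent and independent of this conditioning.

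I now construct $\Omega(d^2)$ candidate paths from $u$ to $v$ of length $r+4\leq 7$. For each unordered pair $\{a,b\}\subseteq[d]\setminus D$ with $a\neq b$ such that at least one ordering satisfies $a\in J_u$ and $b\in J_v$, fix such an ordering $(a,b)$ and take the candidate
\[
u\to u\oplus e_a\to u\oplus e_a\oplus e_b\to\cdots\to v\oplus e_a\oplus e_b\to v\oplus e_b\to v,
\]
where the middle section flips coordinates in the canonical order $b,i_1,\dots,i_r,a$. Since $a,b\notin D$ and $a\neq b$, the path never revisits $u$ or $v$, and its first and last edges lie in $Q_p$ by choice of $a,b$. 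The number of qualifying unordered pairs is at least $\tfrac{1}{2}(|J_u\setminus D||J_v\setminus D|-|J_u\cap J_v|)=\Omega(d^2)$.

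Labelling each vertex $w$ of $Q$ by the set $S_w\subseteq[d]$ with $w=u\oplus\bigoplus_{c\in S_w}e_c$, the $r+3$ intermediate vertices of the candidate for $\{a,b\}$ carry labels $\{a\},\{a,b\},\{a,b,i_1\},\dots,\{a,b,i_1,\dots,i_r\},\{b,i_1,\dots,i_r\}$; since $a,b\notin D$, every label of size at least $2$ uniquely determines $\{a,b\}$ after discarding the elements of $D$, and each middle edge contains such a label as an endpoint, so middle edges arising from distinct unordered pairs are pairwise disjoint. Conditional on $N_u,N_v$, the $\Omega(d^2)$ events ``all $r+2$ middle edges of the candidate for $\{a,b\}$ lie in $Q_p$'' are therefore mutually independent, each of probability $p^{r+2}\geq p^5$, so the probability they all fail is at most $(1-p^5)^{\Omega(d^2)}=2^{-\Omega(d^2)}$, as needed. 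The main technical obstacle is this disjointness verification, which is immediate for $r=1$ (middles of length $3$) but requires more care for $r=3$, where the middle has $5$ edges and collisions between a pair and its ``swap'' must be ruled out; the asymmetric canonical flip order together with the selection of a single ordering per unordered pair is exactly what makes the label-size argument work uniformly in $r$.
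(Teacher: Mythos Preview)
Your proof is correct and follows essentially the same approach as the paper: condition on the $Q_p$-neighbourhoods of $u$ and $v$, build $\Omega(d^2)$ edge-disjoint $A$--$B$ paths of length $r+2\le 5$ through vertices labelled by $\{a,b\}\cup S$ with $S\subseteq D$, note each is open with probability at least $p^5$ independently, and finish with a union bound over the $2^d\cdot O(d^3)$ pairs. One small difference is that you work with \emph{unordered} pairs $\{a,b\}$ and fix a single ordering, whereas the paper indexes paths by ordered pairs $(i,j)$; your choice is actually slightly cleaner, since the paper's paths for $(i,j)$ and $(j,i)$ share the three middle edges in direction $i_1,i_2,i_3$, a harmless slip that your unordered formulation avoids.
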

\begin{proof}
For a vertex $v$ we let $\Gamma(v)$ denote its neighbourhood in $Q_p$. 
Fix vertices $u\neq v$ in $Q$ at distance at most 3.
Consider the case when $d_Q(u,v)=3$ (the other cases are similar).   For convenience, we consider $Q^d$ as a graph on the power set of $[d]$.
We may then suppose wlog that $u=\emptyset$ and $v=\{1,2,3\}$.  Let $A$ and $B$ be sets of at least $dp/2$ neighbours in $Q$ of $u$ and $v$ respectively.

For each $i \neq j$ in $\{4,\ldots,d\}$ with $\{i\} \in A$ and $v \cup \{j\} \in B$, there is a path
\[ \{i\}, \{i,j\}, \{i,j,1\}, \{i,j,1,2\}, \{i,j,1,2,3\}, \{j,1,2,3\}\]
in $Q$, not using any edges incident with $u$ or $v$.  These form at least 
$(|A|-3)(|B|-4) \geq (pd/2 -3)(pd/2 -4)$ 
paths in $Q$ of length 5 between $A$ and $B$; and the paths are pairwise edge-disjoint since each edge identifies the pair $(i,j)$.  But the number of paths is at least $p^2 d^2/5$ for $d$ sufficiently large, and then
\begin{align*}
 \pr(&\mbox{no $u\!-\!v$ path of length 7 in } Q_p \mid \Gamma(u)=A, \Gamma(v)=B)\\
 &\leq 
 (1-p^5)^{p^2d^2/5}  \;\; \leq \;\; e^{-p^7 d^2/5}.
\end{align*}
But $\pr(\mbox{no $u\!-\!v$ path of length 7 in } Q_p \mid \mbox{$u$, $v$ are $Q_p$-good})$ is a weighted average of such probabilities, so
\begin{align*}
\pr(&(\mbox{no $u\!-\!v$ path of length 7 in } Q_p) \land (\mbox{$u$, $v$ are $Q_p$-good}))\\
& \leq 
 \pr(\mbox{no $u\!-\!v$ path of length 7 in } Q_p \mid \mbox{$u$, $v$ are $Q_p$-good})
\;\; \leq \;\; 
 e^{-p^7 d^2/5}.
\end{align*}
Now, by a union bound, the probability that there is a pair of $Q_p$-good vertices at distance 3 in $Q$ which are not joined by a path of length 7 in $Q_p$ is at most
\[ 2^d d^3 e^{-p^7 d^2/5} =  2^{-\Omega(d^2)}.\]
Similarly, with failure probability $2^{-\Omega(d^2)}$, if $d_Q(u,v)=2$ then
wvhp there is a $u\!-\!v$ path of length 6, and if $d_Q(u,v)=1$ then wvhp there is a $u\!-\!v$ path of length 1 or~5.
\end{proof}

The second preliminary lemma is deterministic.  
\begin{lemma} \label{lem.onecomp}
Let $Q'$ be a (fixed) spanning subgraph of $Q$.  Suppose that  each vertex has a $Q'$-good neighbour in $Q$, and that for each pair $u, v$ of $Q'$-good vertices at distance at most 3 in $Q$ there is a $u-v$ path in $Q'$.
Then for each pair $u, v$ of  $Q'$-good vertices there is a $u-v$ path in $Q'$,
and so all $Q'$-good vertices are in the same component of~$Q'$.
\end{lemma}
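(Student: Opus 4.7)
The plan is to connect any two $Q'$-good vertices $u,v$ by stepping through a sequence of $Q'$-good vertices that are pairwise close in $Q$, so that we can apply hypothesis (b) at each step.

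First, take any shortest path $u = v_0, v_1, \ldots, v_k = v$ in $Q$. Along this path, use hypothesis (a) to select, for each intermediate index $1 \leq i \leq k-1$, a $Q'$-good neighbour $v_i'$ of $v_i$ in $Q$ (any one will do); and set $v_0' := u$ and $v_k' := v$, both of which are already $Q'$-good by assumption.

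Next, observe that consecutive vertices $v_i'$ and $v_{i+1}'$ in this new sequence are at distance at most $3$ in $Q$, since there is a walk $v_i' \to v_i \to v_{i+1} \to v_{i+1}'$ in $Q$ of length at most $3$ (each step has length at most $1$). Hypothesis (b) therefore supplies a $v_i'\!-\!v_{i+1}'$ path in $Q'$ for each $i \in \{0,\ldots,k-1\}$. Concatenating these paths yields a walk in $Q'$ from $u$ to $v$, which contains a $u\!-\!v$ path in $Q'$, as required. Hence all $Q'$-good vertices lie in a single component of $Q'$.

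There is no real obstacle here: the argument is a direct chaining lemma, and the only mild subtlety is to check that the endpoints $u$ and $v$ themselves do not need to be replaced (since they are already $Q'$-good, we simply set $v_0' = u$ and $v_k' = v$), so that hypothesis (b) applies already at the first and last steps.
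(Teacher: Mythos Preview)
Your proof is correct and follows essentially the same approach as the paper: take a shortest $u$--$v$ path in $Q$, replace each intermediate vertex by a $Q'$-good $Q$-neighbour (using hypothesis~(a)), keep $u$ and $v$ as the endpoints, and then apply hypothesis~(b) to each consecutive pair of good vertices, which are at distance at most~$3$. The only cosmetic difference is that the paper sets $y_1=u$ and $y_{t-1}=v$ (exploiting that $u$ is itself a good neighbour of $x_1$, etc.), whereas you index from $0$ to $k$ and set $v_0'=u$, $v_k'=v$; both choices work for the same reason.
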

\begin{proof}
Let $u, v$ be $Q'$-good vertices at distance $t> 3$ in $Q$.  We must show that there is a $u-v$ path in $Q'$.  Let $u=x_0, x_1,\ldots, x_{t-1}, x_t=v$ be a $u-v$ path in $Q$ of length $t$.  
For each $i = 1,\ldots,t-1$, let $y_i$ be a $Q'$-good neighbour in $Q$ of $x_i$, 
where we choose $y_1=u$ and $y_{t-1}=v$.  
Then since $d_Q(y_i,y_{i+1}) \leq 3$ for each $i=1,\ldots,t-2$ there is a $y_i-y_{i+1}$ path in $Q'$. 
Hence there is a $u-v$ path in $Q'$.
\end{proof}

We may now deduce an upper bound for $L_2$ as required.
When applying this upper bound, we shall later typically set $\gamma=3$, so that failure probabilities will be negligibly small.
\begin{lemma}\label{lem.L2leqN}
Let $0<p<1/2$ and let $\gamma>0$. Then there is a constant $N$ such that
$\pr(L_2>N) = o(2^{-\gamma d})$.
\end{lemma}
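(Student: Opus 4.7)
The plan is to combine the event $A$ (that all $Q_p$-good vertices lie in a single component of $Q_p$), whose complement has probability $2^{-\Omega(d^2)}$ by~(\ref{eqn.A}), with a first-moment argument on connected sets of bad vertices. Since $\pr(\bar{A})$ is already much smaller than $2^{-\gamma d}$, it suffices to bound $\pr(L_2 > N \text{ and } A)$ for an appropriately chosen constant $N$.

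On the event $A$, any component of $Q_p$ other than the giant consists entirely of bad vertices, since any good vertex in such a component would force it to coincide with the giant. If additionally $L_2 > N$, then some non-giant component contains at least $N+1$ bad vertices; this component is connected in $Q_p$ and hence in $Q$, so by taking a spanning tree of the component and then an initial BFS segment we obtain a subtree $T$ of the ambient hypercube $Q$ on exactly $N+1$ vertices, every one of which is bad. Therefore
\[ \pr(L_2 > N,\, A) \le \sum_{T} \pr\bigl(\text{every } v \in V(T) \text{ is bad}\bigr), \]
where the sum runs over subtrees of $Q$ with $N+1$ vertices.

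Lemma~\ref{lem.fewtrees} applied at each possible root gives a count of at most $2^d (ed)^N$ such subtrees $T$. For a fixed subtree with vertex set $S$ of size $N+1$, the crucial observation is that the edges from $V(Q) \setminus S$ to distinct vertices of $S$ form $|S|$ pairwise disjoint edge sets, so the external degrees in $Q_p$ of the vertices of $S$ are mutually independent binomials $\Bin(d_v^{\text{ext}}, p)$ with $d_v^{\text{ext}} \ge d - N$. A vertex can be bad only if its external degree is less than $dp/2$; since the mean external degree is at least $(d-N)p > dp/2$ for $d$ large, a standard Chernoff bound gives a probability at most $e^{-c(p) d}$ per vertex, for some constant $c(p) > 0$ depending only on $p$. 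By independence,
\[ \pr(\text{every } v \in S \text{ is bad}) \le e^{-c(p)(N+1)d}. \]

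Combining the count and the per-subtree bound yields $\pr(L_2 > N,\, A) \le 2^d (ed)^N e^{-c(p)(N+1)d}$, which is $o(2^{-\gamma d})$ once $N$ is chosen so that $c(p)(N+1) > (\gamma+1)\ln 2$ (the polynomial factor $(ed)^N$ is swamped by the exponential). Adding in the $\pr(\bar{A}) = 2^{-\Omega(d^2)}$ contribution completes the proof. The main obstacle is the dependence among the bad-vertex events, since two vertices of $Q$ at distance two share common neighbours; restricting attention to external edges decouples the events at the cost of losing only a bounded number of tree-internal neighbours per vertex, which is negligible compared with $d$.
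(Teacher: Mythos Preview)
Your argument is correct and follows the same architecture as the paper's proof: on the event $A$, a second component of size exceeding $N$ forces a subtree of $Q$ on $N+1$ bad vertices, and one bounds the expected number of such subtrees via Lemma~\ref{lem.fewtrees} together with a Chernoff bound, then adds back $\pr(\bar A)=2^{-\Omega(d^2)}$.

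The one point of difference is how independence among the ``bad'' events is obtained. The paper exploits the bipartiteness of $Q$: it passes to an independent set $W\subseteq V(T)$ of size at least $(N+1)/2$, so that the full $Q_p$-degrees of the vertices in $W$ are exactly independent $\Bin(d,p)$ variables, giving a bound $e^{-(N+1)pd/16}$ per tree. You instead look at \emph{external} edges (those leaving $S=V(T)$), which are genuinely disjoint across the $N+1$ vertices of $S$; this buys independence for all $N+1$ vertices at the negligible cost of shrinking each binomial parameter from $d$ to at least $d-N$. Your route therefore yields a slightly sharper exponent (effectively $c(p)$ close to $p/8$ rather than $p/16$) and a correspondingly smaller admissible $N$, while avoiding the bipartite trick.
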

\begin{proof}
By a Chernoff bound and a union bound,
\begin{eqnarray*}
&& \pr(\mbox{some vertex has no $Q_p$-good neighbour in $Q$})\\
& \leq & 2^d \, \pr(\Bin(d,p) < pd/2)^d
\;\; \leq \;\;
2^d \, e^{-(pd/8)\, d} \;\; = \;\; 2^{-\Omega(d^2)}.
\end{eqnarray*}
Let $A$ be the event that all $Q_p$-good vertices in $Q_p$ are in the same component.  From the above bound and the last two lemmas
\begin{equation} \label{eqn.A}
 \pr( \bar{A})=2^{-\Omega(d^2)}.
\end{equation}

Now let $N= \lfloor \frac{16(1+\gamma)}{p} \rfloor$. 
If some component of the fragment has size at least $N+1$, then also the giant component has size at least $N+1$.
Hence, if $L_2 > N$ and the event $A$ holds then there is a component with size at least $N+1$ consisting entirely of bad vertices, and so in $Q_p$ there is a subtree with $N+1$ vertices each of which is bad.  But consider any subtree $T$ of $Q$ with $N+1$ vertices.  Since $Q$ is bipartite there is a set $W$ of at least $(N+1)/2$ vertices of $T$ which forms a stable set in $Q$; and the probability that each vertex in such a set $W$ is bad is
\[ \pr(\Bin(d,p)<pd/2)^{|W|} \; \leq \; e^{-\tfrac{pd}{8} \tfrac{N+1}{2}} \; \leq \; e^{-(1+\gamma)d} \]
by a Chernoff bound and the inequality $(N+1)pd/16 \geq (1+ \gamma)d$.  Hence 
by Lemma~\ref{lem.fewtrees} and a union bound, the probability that there is a subtree of $Q_p$ with $N+1$ vertices each of which is bad is at most
\[ 2^d (ed)^N e^{-(1+\gamma)d} = 
  (ed)^N (2/e)^{(1+\gamma)d} \,  2^{-\gamma d} = o(2^{-\gamma d}). \]
Finally, using also~(\ref{eqn.A}), we have
\[ \pr(L_2>N) \leq \pr((L_2>N) \land A) + \pr(\bar{A}) = o(2^{-\gamma d}),\]
which completes the proof.
\end{proof}


\section{Proofs of Theorems~\ref{thm.C1},~\ref{thm.C2} and~\ref{thm.C3}} 
\label{sec.lastproofs}

In this section, we complete the proofs of Theorems~\ref{thm.C1},~\ref{thm.C2} and~\ref{thm.C3}.

\subsection{Proof of Theorem~\ref{thm.C1}}

We have already noted that part (a) of Theorem~\ref{thm.C1} will follow directly from Theorem~\ref{thm.C3} and inequality~(\ref{eqn.potail}). 

\begin{proof}[Proof of Theorem~\ref{thm.C1} part (b)]
Let $N$ be as in Lemma~\ref{lem.L2leqN} for $\gamma=3$, so that $\pr(L_2 >N) =o(2^{-3d})$. 
Consider an integer $t$ with ${\M}<t \leq N$.  By Markov's inequality and Lemma~\ref{3lem:Xi}, 
\[ \Pro(X_t \geq 1) \leq \Ex[X_t]=O(d^{t-1} (2q^t)^d)=e^{-\Omega(d)}, \]
where the last step follows since $2q^t<1$.
Hence wvhp the fragment $\cZ$ has no component containing exactly $t$ vertices.  Putting these results together, we see that $L_2 \leq m_p$ wvhp; and that
\[ \Ex[L_2] \leq \M + N \, \pr(\M < L_2 \leq N) + 2^d \, \pr(L_2 > N) = \M + e^{-\Omega(d)}.\]
But $L_2 \geq m_p$ wvhp by Theorem~\ref{cor.normal} part~(b) with $t=\M$ (since $X_t \geq \mu_t/2$ wvhp).
Hence $L_2=\M$ wvhp. It follows that $\Ex[L_2] \geq \M - e^{-\Omega(d)}$, and thus
$| \Ex[L_2] - \M | = e^{-\Omega(d)}$.

Now consider $\Var(L_2)$, starting with an upper bound.
We have  
\[ \Ex[(L_2-\M)^2 {\mathbb I}_{L_2 \leq N}] \leq N^2 \, \pr(L_2 \neq \M) = e^{-\Omega(d)},\]
and
\[ \Ex[(L_2-\M)^2 {\mathbb I}_{L_2 > N}] \leq 2^{2d} \, \pr(L_2 >N) = e^{-\Omega(d)},\]
where $\mathbb I$ denotes an indicator variable (as earlier).
Hence
\[ \Var(L_2) \leq \Ex[(L_2-\M)^2] = e^{-\Omega(d)},\]
which is an upper bound as required. 
Finally we show that
\begin{equation} \label{eqn.varL2}
  \Var(L_2) \gg q^d . 
\end{equation}
We start by noting a simple general lower bound on variance.
Let the random variable $L$ be integer-valued; let $k$ be an integer and let $x>0$; and suppose that both $\pr(L \leq k)$ and $\pr(L \geq k+1)$ are at least $x$.  Then $\Var(L) \geq x(1-x)$.

We know that $L_2= m_p$ wvhp.
Recall from Remark~\ref{remark} that in Theorem~\ref{cor.normal} both part (a) and the first half of part (c) hold for any given positive integer~$t$ (not just for $t \leq m_p$).  Let $t=m_p +1 \, ( \geq 2)$.
By the first half of part (c) of Theorem~\ref{cor.normal}
\[ \pr(L_2 \geq t) \geq \pr(X_t \geq 1) = \pr(\Po(\mu_t) \geq 1) + O(d^t q^{td}).\]
But since $\mu_t=o(1)$ and $2q^{m_p} \geq 1$, by part (a) of Theorem~\ref{cor.normal}
\[\pr(\Po(\mu_t) \geq 1) = (1+o(1))\, \mu_t 
= \Theta(d^{t-1} (2q^t)^d)  \gg d^t q^{td}.
\]
Thus
\[ \pr(L_2 \geq m_p +1) \geq (1+o(1))\, \mu_{m_p +1} \gg q^d. \] 
Now~(\ref{eqn.varL2}) follows from the general lower bound on variance given above, and this completes the proof of the theorem.
\end{proof}


\subsection{Proof of Theorem~\ref{thm.C2}} 
\label{3sec:proofd}

We prove the two parts of the theorem separately.  We denote the $r$-ball $B_r({\bf 0})$ centred on the vertex $\bf 0$ by $B_r$ for short.

\begin{proof}[Proof of Theorem~\ref{thm.C2} part (a)]
Let $s=m_p+1$ and let $V=V(Q)$. Recall from Theorem~\ref{thm.C1}(b) that $L_2 \leq \M$ wvhp.  We use $\deg(v)$ for the degree of a vertex $v$ in $Q_p$.  Also, for $v \in V$ and $W \subseteq V$, let $e(v,W)$ be the number of edges in $Q_p$ between $v$ and~$W$. For each subset $S \subseteq V$ with $|S|=s$ we have 
\begin{eqnarray*} 
\pr((S \subseteq V(\cZ)) \land (L_2 \leq \M))
& \leq & \pr(\deg(v) \leq \M -1 \;\; \forall v \in S)  \\ 
& \leq &
\pr(e(v,V \setminus S) \leq \M -1 \;\; \forall v \in S) \\ 
& = &
\left(\pr(\Bin(d-s,p) \leq \M -1)\right)^s\\ 
& \leq &
\left( \binom{d-s}{\M -1} q^{d-s-(\M -1)} \right)^s\\
& \leq &
\left( d^{\M -1} q^{d-2\M} \right)^s \; \leq \; (d/q^2)^{\M s} q^{sd}.
\end{eqnarray*}
Hence, for any $r>0$, 
\begin{eqnarray} \label{eqn.probbound}
\pr(|V(\cZ)&\cap& B_r(u)| \geq s \mbox{ for some } u \in V ) \nonumber \\
& = &
\pr \Big( \bigcup_{u \in V} \bigcup_{S \subseteq B_r(u), |S|=s} (S \subseteq V(\cZ)) \Big) \nonumber \\
& \leq &
\pr \Big( \bigcup_{u \in V} \bigcup_{S \subseteq B_r(u), |S|=s} (S \subseteq V(\cZ)) \land (L_2 \leq \M) \Big) + \pr(L_2>\M) \nonumber \\
& \leq &
2^d \binom{|B_r|}{s} (d/q^2)^{\M s} q^{sd} + \pr(L_2>\M) \nonumber \\
& \leq &
 (d/q^2)^{\M s} \, |B_r|^s  \, (2 q^{s})^d + \pr(L_2>\M).
\end{eqnarray}
Since $s>\M$ and $q<1/2$, we have $2q^s<1$ and $1>\log_2 (1/q)- 1/{s} >0$.
Let $\eta_1$ be the unique $x \in (0,\frac12)$ such that $h(x)= \log_2 (1/{q}) - 1/{s}$. 
 Let $0< \eta<\eta_1$.  Then $h(\eta)< \log_2 (1/{q}) - 1/{s}$, and so
\[ 2 \, (2^{h(\eta)} q)^s < 1. \]
Set $r= \eta d$.  Then $|B_r|=2^{h(\eta)d+o(d)}$ by standard estimates.
Thus, by the last inequality, 
\[ |B_r|^s \, (2q^s)^d = ( 2 \, (2^{h(\eta)} q)^s)^d 2^{o(d)}
= 2^{-\Omega(d)}.\]
Hence, by~(\ref{eqn.probbound}) and using $\pr(L_2>\M) = 2^{-\Omega(d)}$, we have
\[ \pr(|V(\cZ) \cap B_r(u)| \geq s \mbox{ for some } u \in V ) = 2^{-\Omega(d)} \]
as required.
\end{proof}
Consider $\eta_1$ in the above proof: it can be shown that if $\eta>\eta_1$ 
then the expected number of $\eta d$-balls containing more than $\M$ vertices in $\cZ$ tends to $\infty$ as $d \to \infty$. 

\begin{proof}[Proof of Theorem~\ref{thm.C2} part (b)]
Recall that $\eta^*$ is defined immediately before Theorem~\ref{thm.C2}.
We may assume that $\eps>0$ is sufficiently small that $\eta^*-\eps>0$ and $\eta^*+\eps<\tfrac12$.  Given $0<\eta \leq \tfrac12$, we have $|B_{\eta d}| = 2^{ h(\eta) d +o(d)}$, as we noted above. 
Also, $2^{-h(\eta^*)} =q$. Hence, by Theorem~1 (a), wvhp
\begin{eqnarray*}
  |B_{(\eta^* - \eps) d}| \cdot Z & \leq &
  2^{ h(\eta^*-\eps) d +o(d)} \cdot 2 \mu_1\\
  &=&
  2^{ (h(\eta^*-\eps) - h(\eta^*)+o(1)) d} \cdot 2^d \\
  &=&  2^{-\Omega(d)} \cdot 2^d,
\end{eqnarray*}
As the number of vertices within distance at most $(\eta^*-\eps)d$ of $\cZ$ is at most $|B_{(\eta^* - \eps) d}| \cdot Z$, this proves the first half of part (b).

For the second half, let $B'$ denote $B_{(\eta^* +\eps)d}$. By the definition of $\eta^*$, and recalling that $h(\eta)$ is strictly increasing on $(0,\tfrac12)$, we have $q^d |B'| = e^{\Omega(d)}$.  Since $Q^d$ is bipartite, there is a stable subset $B''$ of $B'$ with $|B''| \geq \tfrac12 |B'|$; and the probability that no vertex of $\cZ$ is in $B'$ is at most the probability that no vertex in $B''$ is isolated, which equals
\[  (1-q^d)^{|B''|} \leq \exp (-\tfrac12 q^d |B'|) = \exp(-e^{\Omega(d)}).\]
This bound refers to the ball $B'$ centred at $\bf 0$, and indeed to any fixed centre vertex. Taking a union bound over all $2^d$ possible centre vertices shows that the probability that some vertex is not within distance $(\eta^* +\eps)d$ of $\cZ$ is $\exp(-e^{\Omega(d)})$, and thus completes the proof.
\end{proof}

In the last part of the proof above, the number of isolated vertices in $B''$ has distribution $\Bin(|B''|,q^d)$, with mean at least $\tfrac12 |B'| q^d=e^{\Omega(d)}$.  Hence, by a Chernoff bound, the probability that there are at most $\tfrac14 |B'| q^d$ isolated vertices in the ball $B'$ is at most $e^{-e^{\Omega(d)}}$; and so, by a union bound, wvhp each $(\eta^* +\eps)d$-ball contains exponentially many isolated vertices.


\subsection{Proof of Theorem~\ref{thm.C3}}

By Lemma~\ref{lem.L2leqN} we may choose a fixed integer $N \geq 2$ such that $\pr(L_2>N) \leq 2^{-3d}$. 
\smallskip 

\noindent
\begin{proof}[Proof of Theorem~\ref{thm.C3} part (a)]
Note that $Z \leq 2^d$ and so
\[ Z \leq \sum_{t=1}^N X_t + 2^d {\mathbb I}_{L_2>N}.\]
By Lemma~\ref{3lem:Xi}, for each $2 \leq t \leq N$, $\mu_t = \Ex[X_t]= \Theta(d^{t-1}(2q^t)^d)$, so $\mu_t$ is $O(d(2q^2)^d)$. 
 Hence,
\begin{eqnarray*}
\Ex[Z] & \leq &
\sum_{t=1}^N \mu_t + 2^d\, \pr(L_2>N)\\
 & \leq &
 \mu_1  +O(d(2q^2)^d) + 2^{-2d} \; = \;
   (1+O(dq^d))  \mu_1. 
\end{eqnarray*} 
Also, of course, $\mu_1 + \mu_2 \leq \Ex[X] \leq \Ex[Z]$, which completes the proof for the expected values.
\medskip

Now consider variances. Let $X_{\leq N} = \sum_{t=1}^{N} X_t$ be the total number of components in $Q_p$ of size at most $N$; and similarly let $Z_{\leq N}=\sum_{t=1}^{N} tX_t$ be the total size of the components of size at most $N$.  Then
\[ \Var(Y) -  \Var(Y_{\leq N}) \leq \Ex[Y^2\!-\!Y_{\leq N}^2] \leq 2^{2d} \pr(L_2 > N) \leq 2^{-d},\]
and
\[  \Var(Y_{\leq N})\!-\!\Var(Y) \leq \Ex[Y\!\!+\!Y_{\leq N}] \, \Ex[Y\!\!-\!Y_{\leq N}] \leq 2 \Ex[Y] 2^d \pr(L_2\!>\!N)=o(2^{-d}),
\]
and so
\[ |\Var(Y) - \Var(Y_{\leq N})| = O(2^{-d}).\]
Hence by Lemma~\ref{lem.new9a}(b) and (c), 
with $H_1,\ldots,H_r$ listing a representative of each ambient-isomorphism class of connected cube subgraphs with at most $N$ vertices, we see that $\Var(Y) = (1+O(dq^d)) \mu_1$, as required.  
\end{proof}

\begin{proof}[Proof of Theorem~\ref{thm.C3} part (b)]
Let us show first that
\begin{equation} \label{eqn.dTVZ}
  d_{TV}(Y, \Po(\lambda)) = O(dq^d).
\end{equation}
Write $\lambda_{\leq N}$ for $\Ex[Y_{\leq N}]$. Now $d_{TV}(Y, \Po(\lambda))$ is at most
\[ 
d_{TV}(Y, Y_{\leq N}) + d_{TV} (Y_{\leq N}, \Po(\lambda_{\leq N})) + d_{TV}(\Po(\lambda_{\leq N}), \Po(\lambda)). \]
We consider the three terms in the sum in order. Firstly, we have
\[ d_{TV}(Y, Y_{\leq N}) \leq \pr(Y \neq Y_{\leq N}) = \pr(L_2>N) \leq 2^{-3d} = o(q^d).\]
Secondly, by Lemma~\ref{lem.new9a}(b) and (c) (with $H_i$ as above)
  \[ d_{TV} (Y_{\leq N}, \Po(\lambda_{\leq N})) = O(dq^d). \]
Thirdly, for $\mu, \delta >0$ the sum of independent $\Po(\mu)$ and $\Po(\delta)$ random variables has distribution $\Po(\mu+\delta)$; and so
\[ d_{TV}(\Po(\mu), \Po(\mu+\delta)) \leq \pr(\Po(\delta) \neq 0) = 1-e^{-\delta} \leq \delta.\]
 Thus
\[ d_{TV}(\Po(\lambda_{\leq N}), \Po(\lambda)) \leq
\lambda - \lambda_{\leq N} \leq 2^d \, \pr(L_2>N) \leq 2^{-2d} = o(q^d).\]
Putting these inequalities together we obtain~(\ref{eqn.dTVZ}).

Finally, since also $\Var(Z) \sim \lambda \to \infty$ as $d \to \infty$, it follows from~(\ref{eqn.dTVZ}) that $Z^*$ is asymptotically standard normal (see the discussion immediately before Theorem~\ref{thm.C3}).
This completes the proof of part (b), and thus of Theorem~\ref{thm.C3} (and thus also of Theorem~\ref{thm.C1}).
\end{proof}


\section{Joint distributions: proof of Theorem~\ref{thm.jointdistrib}} 
\label{sec.joint}

In this section we prove Theorem~\ref{thm.jointdistrib} on the joint distribution of the numbers of components of different types in the fragment.  We start by presenting a general lemma on approximating a joint distribution by a product of Poisson distributions. As in Subsection~\ref{subsec.prelimvar}, let $(A_i: i \in I)$ be a family of events with a dependency graph $L$,
and write $i \sim j$ if $i$ and $j$ are adjacent in $L$. For each~$i$, let $\pi_i= \pr(A_i)$ and let $\mathbb{I}_i$ be the indicator function of $A_i$.
Now we let $I$ be partitioned into $I_1 \cup \cdots \cup I_r$ for some $r \geq 1$.  For each $j \in [r]$, let $X_j=\sum_{i \in I_j} \ind_{A_i}$ and let $\lambda_j = \Ex[X_j]$. The following lemma is essentially a special case of Theorem 10.K of Barbour, Holst and Janson~\cite{BHJ} when all means $\lambda_j \to \infty$.  Sums and products over $j$ or $j'$ always mean over $j$ or $j'$ in $[r]$.
\begin{lemma} \label{lem.jointPo} 
With notation as above, assume that each $\lambda_j \to \infty$ as $d \to \infty$.  Then for $d$ sufficiently large 
\begin{eqnarray*}
&& d_{TV}({\cal L}(X_1,\ldots,X_r),\prod_{j} \Po(\lambda_j)) \\
& \leq &  \sum_j  \frac{\ln(\lambda_j)}{\lambda_j} \sum_{i \in I_j} \pi_i^2  + \sum_j\!\sum_{j'}\!\frac{\ln(\lambda_j \lambda_{j'})}{\sqrt{\lambda_j \lambda_{j'}}}\sum_{i \in I_j}\!\sum_{i' \in I_{j'}} \ind_{i \sim i'} ( \pr(A_{i}\!\land\!A_{i'}) + \pi_{i} \pi_{i'}).
\end{eqnarray*}
\end{lemma}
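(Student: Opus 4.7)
The plan is to derive this as essentially a direct application of Theorem 10.K of Barbour, Holst and Janson~\cite{BHJ}, which is itself a multivariate Stein--Chen theorem. Their result bounds $d_{TV}({\cal L}(X_1,\ldots,X_r), \prod_j \Po(\lambda_j))$ by an expression of exactly the two-part shape appearing in our statement: a ``diagonal'' contribution capturing self-pairings $\sum_i \pi_i^2$ within each $I_j$, plus a ``neighbourhood'' contribution summing $\pr(A_i \land A_{i'}) + \pi_i \pi_{i'}$ over dependent pairs $i \sim i'$ in the dependency graph $L$. The prefactors in their bound are the multivariate Stein factors $\phi_1(\lambda_j)$ and $\phi_2(\lambda_j, \lambda_{j'})$ associated with the generator of the product-Poisson target law, and the only work is to replace these factors by the simpler expressions $\ln(\lambda_j)/\lambda_j$ and $\ln(\lambda_j\lambda_{j'})/\sqrt{\lambda_j\lambda_{j'}}$ in the asymptotic regime $\lambda_j \to \infty$.

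First I would quote the BHJ bound in full, taking care to match their partitioned-index setup and their dependency-graph conventions to ours. Second, I would invoke the analytic estimates on Stein factors for products of independent Poisson distributions developed in Chapter 10 of~\cite{BHJ}: these give $\phi_1(\lambda) = O(\log(1+\lambda)/\lambda)$ and $\phi_2(\lambda, \lambda') = O(\log(1+\lambda\lambda')/\sqrt{\lambda\lambda'})$ with absolute implicit constants, derived by controlling the first- and second-order differences of the solution to the multivariate Stein equation on $\mathbb{Z}_+^r$. Third, I would use the assumption that every $\lambda_j \to \infty$ as $d \to \infty$ to conclude that for $d$ sufficiently large one has $\phi_1(\lambda_j) \leq \ln(\lambda_j)/\lambda_j$ and $\phi_2(\lambda_j,\lambda_{j'}) \leq \ln(\lambda_j \lambda_{j'})/\sqrt{\lambda_j\lambda_{j'}}$: the universal constants can be absorbed into the logarithms once $\lambda_j$ is large (for instance, once $\lambda_j$ exceeds some explicit threshold depending on $C$). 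Substituting these pointwise Stein-factor bounds into the BHJ inequality yields the displayed bound exactly.

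The main obstacle, such as it is, is essentially bookkeeping: matching the notation of~\cite{BHJ} (who use neighbourhood sets $\Gamma_i$ and $\Gamma_i^{\mathrm{s}}$ and a slightly different labelling of the ``local'' versus ``neighbourhood'' sums) to our more compact dependency-graph formulation, and verifying that the replacement of $C \log(1+\lambda)$ by $\ln(\lambda)$ is legitimate uniformly in the partition once each $\lambda_j$ is sufficiently large. No genuinely new probabilistic input is needed beyond the multivariate Stein--Chen machinery developed in that reference, which is precisely why the statement describes the lemma as ``essentially'' a special case of theirs.
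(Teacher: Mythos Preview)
Your proposal is correct and matches the paper's treatment: the paper does not give a proof of this lemma at all but simply states that it ``is essentially the special case of Theorem 10.K of Barbour, Holst and Janson~\cite{BHJ} when all means $\lambda_j \to \infty$,'' which is exactly the derivation you outline. Your write-up is in fact more detailed than the paper's, spelling out the Stein-factor replacement step that the paper leaves implicit.
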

\begin{proof}[Proof of Theorem~\ref{thm.jointdistrib}]
As earlier, given $d$ let $\cS = \cS(d)$ be the set of subgraphs of $Q^d$ ambient isomorphic to one of the graphs $H_1,\ldots,H_r$. List the members of $\cS$
as $G_1,\ldots,G_N$; and let $A_i$ be the event that 
$G_i$ is a component of $Q_p$. 
We let $i, i'$ run over $[N]$ and $j,j'$ run over $[r]$. For distinct $i,i'$ let $i \sim i'$ if either the vertex sets $V(G_i)$ and $V(G_{i'})$ intersect or there is an edge of $Q^d$ between them; and note that this gives
a dependency graph $L$.
For each $j$, let $I_j=\{i : G_i \mbox{ is ambient isomorphic to } H_j\}$.

Now we can apply Lemma~\ref{lem.jointPo}. We must bound the two terms in the lemma. First, by~(\ref{eqn.pmax}), there is a constant $\alpha$ such that, for each~$j$,
\[ \sum_{i \in I_j} \pi_i^2 \leq \sum_{i \in I_j} \pi_i\cdot \alpha q^{v(G_i)d} = \lambda_j \cdot \alpha  q^{v(H_j)d}. \]
Hence
\begin{equation} \label{eqn.part1}
 \sum_{j} \frac{\ln(\lambda_j)}{\lambda_j} \sum_{i \in I_j} \pi_i^2 \leq
\alpha \sum_j \ln(\lambda_j) \, q^{v(H_j)d} = O(d q^{t^*d})
\end{equation}
since $\ln(\lambda_j)=O(d)$ uniformly over $j$.

For the second term, let $j, j' \in [r]$ (not necessarily distinct).  For $i \in I_j$ and $i' \in I_{j'}$, as in~(\ref{eqn.pmax2}) we have
\[ \pr(A_i \land A_{i'}) \leq \pi_i \pi_{i'}\, q^{-v(H_j)v(H_{j'})} \leq
\pi_i \, \alpha q^{v(H_{j'})d} q^{-v(H_j)v(H_{j'})}.\]
Hence, arguing as in the proof of~(\ref{eqn.D+}),
\begin{eqnarray*}
\beta(j,j') 
& := &
\sum_{i \in I_j} \sum_{i' \in I_{j'}} \ind_{i \sim i'} \big( \pr(A_i \land A_{i'}) + \pi_i \pi_{i'} \big)\\
& \leq &
\sum_{i \in I_j} \pi_i \cdot \alpha q^{v(H_{j'})d} \big(q^{-v(H_j)v(H_{j'})} +1 \big) v(H_j)d \, (ed)^{v(H_{j'})-1}\\
&=&
\lambda_j \cdot O\big( d^{v(H_{j'})} q^{v(H_{j'})d} \big) \; = \;
\lambda_j \cdot O\big( (d q^d)^{v(H_{j'})} \big).
\end{eqnarray*}
Similarly, swapping $j$ and $j'$, we have
\[ 
\beta(j,j')  \leq \lambda_{j'} \cdot O\big( (d q^d)^{v(H_{j})} \big);\]
and so
\[ 
\beta(j,j') \leq \sqrt{\lambda_j \lambda_{j'}} \cdot O\big( (d q^d)^{t^*_{j j'}} \big),\]
where $t^*_{j j'} = \min \{v(H_j), v(H_{j'})\}$.
Hence, 
\[ \frac{\ln(\lambda_j \lambda_{j'})}{\sqrt{\lambda_j \lambda_{j'}}} \,
\beta(j,j')  = O(d) \, O((dq^d)^{t^*_{j j'}}) = O\big(d^{t^*+1} q^{t^*d}\big). \]
So, summing over the bounded number of choices of $j$ and $j'$, we obtain
\begin{eqnarray*}
&&
 \sum_j\!\sum_{j'} \!\frac{\ln(\lambda_j \lambda_{j'})}{\sqrt{\lambda_j \lambda_{j'}}}\sum_{i \in I_j}\!\sum_{i' \in I_{j'}} \ind_{i \sim i'} ( \pr(A_{i}\!\land\!A_{i'}) + \pi_{i} \pi_{i'})\\
 & = &
  \sum_j\!\sum_{j'} \!\frac{\ln(\lambda_j \lambda_{j'})}{\sqrt{\lambda_j \lambda_{j'}}} \, \beta(j,j') \;\; = \;\;  
 O\big(d^{t^*+1} q^{t^*d}\big).
 \end{eqnarray*}
This result, together with~(\ref{eqn.part1}) lets us use Lemma~\ref{lem.jointPo} to complete the proof of Theorem~\ref{thm.jointdistrib}.
\end{proof}


\section{Concluding remarks} \label{sec.conclusion}

In Theorems~\ref{thm.C1} to~\ref{thm.jointdistrib} we have seen quite a full picture of the rich component structure of the random graph $Q_p=Q_p^d$, for fixed $p$ with $0<p<\frac 12$.
In particular, given an integer $t$ with $1\le t\le \M$, by Theorem~\ref{cor.normal} the number $X_t$ of components in $Q_p$ of size $t$, with mean $\mu_t$, has close to the Poisson distribution $\Po(\mu_t)$, and thus the standardised version $X_t^*$ has close to the standard normal distribution. In Proposition~\ref{prop.local} we gave a partial corresponding local limit result for convergence to the Poisson distribution: it would be interesting to learn more on such local behaviour.

It would also be interesting to consider the component structure in the case when $p$ is not fixed in $(0,\frac12)$, but $p=p(d)$ decreases suitably slowly to 0 as $d \to \infty$.  (Thanks to Remco van der Hofstadt for asking about this case.) 
\bigskip

\noindent {\bf Acknowledgement:}
We would like to thank the referees for their careful reading and very helpful comments.


%

%
\addcontentsline{toc}{section}{Bibliography}
%

\end{document}